\documentclass[10pt]{article}

\makeatletter
\DeclareRobustCommand{\qed}{%
  \ifmmode 
  \else \leavevmode\unskip\penalty9999 \hbox{}\nobreak\hfill
  \fi
  \quad\hbox{\qedsymbol}}
\newcommand{\openbox}{\leavevmode
  \hbox to.77778em{%
  \hfil\vrule
  \vbox to.675em{\hrule width.6em\vfil\hrule}%
  \vrule\hfil}}
\newcommand{\qedsymbol}{\openbox}
\newenvironment{proof}[1][\proofname]{\par
  \normalfont
  \topsep6\p@\@plus6\p@ \trivlist
  \item[\hskip\labelsep\itshape
    #1.]\ignorespaces
}{%
  \qed\endtrivlist
}
\newcommand{\proofname}{Proof}
\makeatother

\usepackage[utf8]{inputenc}

\usepackage{bm}
\usepackage{float}
\usepackage{placeins}
\usepackage{color}
\usepackage{latexsym}
\usepackage{dsfont}
\usepackage{amssymb}
\usepackage{comment}
\usepackage{graphicx}
\usepackage{amsmath,amsfonts,amssymb,enumerate,theorem,euscript,array,amsfonts,mathrsfs}
\usepackage{hyperref}
\usepackage{appendix}
\usepackage[T1]{fontenc}
\usepackage{babel}
\numberwithin{equation}{section}
\usepackage{bbm}
\usepackage{subfigure}
\usepackage{color}

\usepackage{stmaryrd}

\usepackage[algo2e,ruled,vlined]{algorithm2e} 
 \usepackage{algorithm}
 \usepackage{algorithmicx}
\usepackage{algpseudocode}
\usepackage{ marvosym }
\usepackage{hyperref}
\usepackage{bm}
\usepackage{xcolor, soul}

\usepackage{comment}

\usepackage{tikz}
\usetikzlibrary{fit,matrix,chains,positioning,decorations.pathreplacing,arrows}

\usepackage[a4paper,margin=1in,heightrounded]{geometry}

\usepackage{algpseudocode}
\usepackage{algorithm}

\usepackage[normalem]{ulem}

\def \b1{\bf{1}}

\def \R{\mathbb{R}}

\def \E{\mathbb{E}}
\def \F{\mathbb{F}}

\def \P{\mathbb{P}}

\def\esssup_#1{\underset{#1}{\mathrm{ess\,sup\, }}}

\def\argmin_#1{\underset{#1}{\mathrm{argmin\, }}}
\def\argmax_#1{\underset{#1}{\mathrm{argmax\, }}}

\def\dm#1{\frac{\delta}{\delta m}}

\def \Ac{{\cal A}}
\def \Bc{{\cal B}}

\def \Ec{{\cal E}}
\def \Fc{{\cal F}}

\def \Pc{{\cal P}}

\def \Oc{{\cal O}}

\def\reff#1{{\rm(\ref{#1})}}

\def\ta{{\bf T}}

\def\beqs{\begin{eqnarray*}}
\def\enqs{\end{eqnarray*}}
\def\beq{\begin{eqnarray}}
\def\enq{\end{eqnarray}}

\newtheorem{Theorem}{Theorem}[section]

\newtheorem{Proposition}{Proposition}[section]

\newtheorem{Lemma}{Lemma}[section]

\newtheorem{Remark}{Remark}[section]

\numberwithin{equation}{section}

\title{
Multi-Asset Utility Maximization with Jump Signals}

\author{
  Sigui Brice Dro\footnote{BPCE SA and LPSM, Sorbonne University and
  Universit\'e Paris Cit\'e, \textsf{dro@lpsm.paris}.
  The work of this author is partially supported by
  Agence Nationale de la Recherche (ReLISCoP grant ANR-21-CE40-0001).}
}
\begin{document}

\maketitle

\begin{abstract}
In this paper, we study the portfolio utility maximization problem in a market where the risky assets are driven by correlated Brownian motions and a common independent homogeneous Poisson random measure, and where investment strategies may incorporate jump signals.

Following the same approach as in the one-dimensional case for the exponential utility function \cite{turki2026portfolio}, we first represent the portfolio dynamics as semimartingale processes. We then use martingale optimality principle to derive the corresponding backward stochastic differential equation (BSDE) with jumps to characterize both value function and an optimal strategy in terms of its solution. We subsequently prove the existence and uniqueness of the solution to the related BSDE with jumps. 

We also address the idiosyncratic case i.e where the investor receive heterogeneous jump signals. Finally, we provide numerical illustrations with Gaussian signals for the logarithmic case.
\end{abstract}

\noindent\textbf{\noindent {Keywords:}} Multi-asset, Jump signals, Backward Stochastic Differential Equations with jumps, Utility maximization.

\medskip
\noindent\textbf{Mathematics Subject Classification (2020):} \textit{ 65C30, 
91B70, 
93E20 
.
}
\tableofcontents{}
\newpage
\section{Introduction and Motivation}
The aim of this work is to solve Merton problem in a discontinuous framework with jump signals in the multidimensional setting. In this framework, asset prices are driven by a correlated multidimensional Brownian motion and an independent Poisson point process. 
The utility maximization problem is a classic topic in mathematical finance and has been extensively studied since the seminal works of Merton \cite{Merton69, Merton71}. In the continuous case, Cvitanic and Karatzas solve the utility maximization problem under convex portfolio constraints for exponential utility using convex duality techniques. In a related work, El Karoui and Rouge \cite{rouge2000pricing} use BSDEs to compute the value function and the optimal strategy when strategies take values in a convex cone. Later Hu et al. in \cite{hu2005utility} relax this assumption to closed  sets and apply the martingale optimality principle, leading to quadratic BSDEs as studied by Kobylanski \cite{kobylanski}.

In the discontinuous case, Morlais studies the problem for finite activity jumps \cite{MAM08} and later extends the results to the infinite activity case \cite{MAM09}. When incorporating jump signals, Bank and K\"orber analyze the power utility maximization problem using dynamic programming principle in \cite{bank2022merton} for a single asset, and extend their results to a multi-agent framework in \cite{BS25}. Using BSDE techniques, Turki, Dro, and Idris \cite{turki2026portfolio} consider the exponential utility case for an infinite jumps activity.

Our main contribution is to extend these results  considering exponential, power and logarithm utility function to the multidimensional setting with correlated Brownian motion and independent jumps, assuming that the investor receives signals across all assets. We apply the martingale optimality principle to derive a BSDE, for which we establish existence in the degenerate case using results from \cite{MAM08} and \cite{turki2026portfolio}. 

For exponential function in the non-degenerate case, the correlation structure challenges the truncation method used in  \cite{turki2026portfolio}, \cite{MAM08} to get existence of solution. Due to the specific form of the driver of the BSDE, we employ inf-convolution techniques. For power and logarithmic utility case, the driver is characterized by a supremum contrary to the exponential case given by an infimum but remains tractable because the related BSDEs fall into the standard framework which allows to get existence and unicity of solution. Moreover, we deal with the idiosyncratic case when the investor receives heterogeneous jump signals. From a numerical perspective, while \cite{bank2022merton} focuses on power utility and \cite{turki2026portfolio} on exponential utility, we concentrate here on the logarithmic utility  when investor have the same signal on two assets. In particular, for a fixed correlation parameter the more the signal is reliable, the larger the investor's expected utility.\\
The rest of the paper is organized as follows. In Section~\ref{sect2}, we present the probabilistic market model.
In Section~\ref{sect3}, we introduce the class of admissible classical trading strategies together with an extended class of signal-based strategies. We then study the exponential utility maximization problem, derive the associated BSDE with jumps via the martingale optimality principle, and establish the existence and uniqueness of its solution. Section~\ref{sect4} is devoted to the derivation of the corresponding BSDE and the characterization of the optimal strategy for power and logarithmic utility functions under the positive wealth constraint. In Section~\ref{sect6}, we investigate the case where the investor receives different signals from the underlying assets. Finally, Section~\ref{sect7} presents numerical experiments in a two-asset setting.

\section{Market model}\label{sect2}
\subsection{Probabilistic settings}
We consider a complete probability space \((\Omega, \mathcal{A}, \mathbb{P})\) endowed with a $d$ correlated Brownian motion \((W^1_t, ..., W^d_t)_{t \geq 0}\) with a non singular correlation matrix $\Sigma = (\rho _{ij})_{1 \leq i, j \leq d}$ and  an independent  homogeneous Poisson measure $N$ defined on a polish mark space $(E, \mathcal{E})$. We suppose that the compensator of $N$ is \(dt \otimes \nu(de)\), where $\nu$ is a finite positive measure such that
\begin{equation}
\nu(\{0\})~=~0.
\end{equation}
We denote by $\widetilde{N} $ the compensated random measure of  $N$ and we recall that it is given by 
\begin{equation}
\widetilde{N}(dt,de) = N(dt,de)-\nu(de)dt\;.
\end{equation}

We define the  filtration \(\mathbb{F}=(\mathcal{F}_t)_{t \geq 0}\)  as the completion of the natural filtration generated by $W^1, \dots, W^d$ and $N$. $\mathbb{F}$ is therefore given by
\beqs
\mathcal{F}_t = \sigma(W^i_s, N([0,s] \times A) : s \leq t, A \in \mathcal{E}, i =1,...,d)\vee \mathcal{N}\;,\quad t\geq 0\;,
\enqs
where $\mathcal{N}$ stands for the class of $\mathbb{P}$-negligible sets, and $\mathbb{F}$ 
and satisfies the usual conditions (i.e. right-continuity and $\mathcal{F}_0$ contains the $\mathbb{P}$-null sets).
Let \(\mathcal{P} = \sigma((Y_t)_t: Y \text{ is a continuous, } \mathbb{F}\text{-adapted process})\) denote the \(\sigma\)-algebra of predictable processes.\\
We recall the definition of bounded mean oscillation (BMO) martingales and Kazamaki's criterion, which ensures that the stochastic exponential of a BMO martingale is a true martingale. For a comprehensive treatment of this theory, we refer to \cite{kazamaki2006continuous}.

A martingale $M$ is said to belong to the class of BMO martingales if there exists a constant $c > 0$ such that for every $\mathcal{F}$-stopping time $\tau$,
\begin{align}
\operatorname*{ess\,sup}_{\Omega} \, \mathbb{E}^{\mathcal{F}_\tau}\!\left[ \langle M \rangle_T - \langle M \rangle_\tau \,  \right] \leq c^2,
\quad \text{and} \quad
|\Delta M_\tau|^2 \leq c^2.
\end{align}
We shall use in the sequel the following result
\begin{Lemma} (Kazamaki's criterion) \label{Kazamaki_lemma}
Let $\delta > 0$ and let $M$ be a BMO martingale such that $\Delta M_t \geq -1 + \delta$, $\mathbb{P}$-a.s. for all $t \in [0,T]$. Then the Doléans--Dade exponential $\mathcal{E}(M)$ is a true martingale.
\end{Lemma}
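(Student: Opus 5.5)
The plan is the classical route of Kazamaki and of Lépingle--Mémin for exponentials of BMO martingales with jumps: first establish uniform integrability of $\mathcal{E}(M)$ on $[0,T]$, then use that it is a nonnegative local martingale. Since $\Delta M \geq -1+\delta > -1$, the explicit formula
\begin{align*}
\mathcal{E}(M)_t = \exp\Bigl(M_t - \tfrac12\langle M^c\rangle_t\Bigr)\prod_{s\le t}(1+\Delta M_s)e^{-\Delta M_s}
\end{align*}
shows that $\mathcal{E}(M)$ is a strictly positive local martingale, hence a supermartingale. It therefore suffices to prove $\mathbb{E}[\mathcal{E}(M)_T]=1$, or equivalently that the family $\{\mathcal{E}(M)_\tau : \tau \text{ stopping time}\}$ is uniformly integrable.

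\textbf{Step 1: exponential integrability of the BMO martingale.} Using the two conditions in the BMO definition (bounded conditional quadratic variation increments and bounded jumps $|\Delta M|\le c$), I would apply the energy/John--Nirenberg type inequality: for $\varepsilon$ small enough, depending on $c$,
\begin{align*}
\sup_\tau \mathbb{E}^{\mathcal{F}_\tau}\bigl[\exp(\varepsilon(\langle M\rangle_T-\langle M\rangle_\tau))\bigr] <\infty .
\end{align*}
This is obtained by iterating the BMO bound: the $k$-th moment of $\langle M\rangle_T-\langle M\rangle_\tau$ conditionally is at most $k!\,c^{2k}$. Combined with Burkholder--Davis--Gundy, whose constants are controlled because the jumps are bounded, this gives exponential moments of $\sup_t|M_t|$ as well.

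\textbf{Step 2: reverse Hölder inequality.} This is the main obstacle. Here the lower bound $\Delta M\ge -1+\delta$ is crucial, since it keeps $\log(1+\Delta M)$ bounded below by $\log\delta$. I would show that there is $p>1$ with
\begin{align*}
\mathbb{E}^{\mathcal{F}_\tau}\bigl[(\mathcal{E}(M)_T/\mathcal{E}(M)_\tau)^p\bigr]\le C .
\end{align*}
To do so, write $\mathcal{E}(M)^p=\mathcal{E}(pM)\cdot\exp(\text{compensator terms})$. Because $1+x\ge\delta$ and $x\le c$, the jump part satisfies
\begin{align*}
(1+x)^p-1-px\le C_{p,\delta,c}\,x^2 .
\end{align*}
Hence $\mathcal{E}(M)^p\le \mathcal{E}(\tilde M)\exp(C\langle M\rangle)$ for some local martingale $\tilde M$ whose stochastic exponential is positive. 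Applying Hölder's inequality and Step 1 with $p$ close enough to $1$ that $C(p)\,r\le\varepsilon$, the supermartingale property of positive stochastic exponentials bounds the right-hand side uniformly in $\tau$. An alternative, if this estimate is delicate, is Kazamaki's original argument: show $\sup_\tau\mathbb{E}[\exp(\tfrac12 M_\tau)]<\infty$-type conditions, using the bounded jumps, and conclude via Lépingle--Mémin.

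\textbf{Step 3: conclusion.} The uniform $L^p$ bound with $p>1$ from Step 2, taken at $\tau=0$ along a localizing sequence $\tau_n$, gives uniform integrability of $\mathcal{E}(M)_{\tau_n\wedge T}$. Passing to the limit yields $\mathbb{E}[\mathcal{E}(M)_T]=1$, and the supermartingale with constant expectation is a true martingale. Since the statement is classical (Kazamaki, Theorem 2.3 and its jump extension by Lépingle--Mémin / Izumisawa et al.), in the paper I would also simply cite \cite{kazamaki2006continuous} after this sketch.
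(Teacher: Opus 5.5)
The paper does not prove this lemma. It recalls it as a known fact and cites Kazamaki's monograph, so your closing decision to cite is exactly what the paper does. That monograph treats continuous martingales, so your pointer to the jump extensions is actually the more precise reference.

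Your self-contained sketch, however, has a genuine gap in Step 2, which carries the whole content of the result.

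From $\mathcal{E}(M)^p\le\mathcal{E}(\tilde M)\,e^{C(p)\langle M\rangle}$, H\"older's inequality with exponents $(s,\tfrac{s}{s-1})$ produces the factor $\mathbb{E}^{\mathcal{F}_\tau}\bigl[(\mathcal{E}(\tilde M)_T/\mathcal{E}(\tilde M)_\tau)^{s}\bigr]$ with $s>1$. The supermartingale property of a positive stochastic exponential controls only its first moment. Bounding an $s$-th moment of $\mathcal{E}(\tilde M)$, with $\tilde M$ a BMO martingale of the size of $pM$, is a reverse H\"older inequality of the very kind you are trying to prove, so the step is circular. Letting $p\downarrow 1$ only pushes $s$ towards $1$; it does not remove the problem.

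The standard way to keep the exponential at a power at most $1$ is Kazamaki's rewriting, written here for the continuous part:
\[
\mathcal{E}(M)^p=\mathcal{E}(rpM)^{1/r}\exp\Bigl(\tfrac{p(rp-1)}{2}\langle M\rangle\Bigr),\qquad r>1 .
\]
After H\"older with exponents $r$ and $\tfrac{r}{r-1}$ this leaves
\[
\mathbb{E}^{\mathcal{F}_\tau}\Bigl[\exp\Bigl(\tfrac{rp(rp-1)}{2(r-1)}\bigl(\langle M\rangle_T-\langle M\rangle_\tau\bigr)\Bigr)\Bigr]^{(r-1)/r},\qquad \tfrac{rp(rp-1)}{2(r-1)}=\tfrac{rp}{2}\Bigl(p+\tfrac{p-1}{r-1}\Bigr)>\tfrac{p^2}{2}.
\]
This coefficient never drops below $\tfrac12$, however close $p$ is to $1$. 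Your Step 1 (John--Nirenberg) gives such exponential moments only when $\|M\|_{BMO}$ is small. So the route you describe proves the lemma only for BMO martingales of small norm, essentially a conditional Novikov argument, not for all of them.

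Your fallback fails for the same reason: $\sup_\tau\mathbb{E}[\exp(\tfrac12 M_\tau)]<\infty$ is false for BMO martingales of large norm. Take $M=cB_{\cdot\wedge\varrho}$, with $B$ a Brownian motion and $\varrho$ the first time its drawdown $\max_{s\le t}B_s-B_t$ reaches $1$.
\begin{enumerate}[(i)]
\item For every stopping time $\sigma$, $\mathbb{E}^{\mathcal{F}_\sigma}[\varrho-\varrho\wedge\sigma]\le 1$, so $M$ is BMO.
\item $M_\infty=c(S-1)$ with $S$ exponential of mean $1$, hence $\mathbb{E}[e^{M_\infty/2}]=\infty$ as soon as $c\ge 2$.
\item $\mathcal{E}(M)$ is still uniformly integrable there, and a deterministic time change puts the example on $[0,T]$.
\end{enumerate}

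What is missing is the genuinely nontrivial ingredient: a reverse H\"older inequality valid for every BMO norm. The H\"older/John--Nirenberg combination above does not deliver it. The relevant result is Kazamaki's theorem that $\mathcal{E}(M)$ satisfies $(R_p)$ whenever $\|M\|_{BMO}<\Phi(p)$, with $\Phi(p)\to+\infty$ as $p\downarrow 1$. Its extension to jumps bounded below by $-1+\delta$ is due to Dol\'eans-Dade--Meyer and Izumisawa--Sekiguchi--Shiota. Either supply that argument or, as the paper does, state the lemma as a citation.
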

\subsection{The financial market}

We follow the same framework as in\cite{BS25}. We consider a frictionless financial market over a time period $[0,T]$  where $T>0$ is a deterministic finite horizon time. We suppose that this financial market is composed by $d + 1 $ assets. The first one is a riskless asset that we assume for simplicity to have a zero interest rate without lost of generality.
The $d$ risky assets  denoted by $S^{i}$ are defined by the following SDE
\begin{equation}
S^{i}_t =  s_{i}+ \int_0^tS^{i}_{u-}[ \kappa_i du + \sigma_i dW^i_u + \int_{E}\eta_i(e) \widetilde{N}(du, de)],\qquad  t\in[0, T]\;
\end{equation} 
for some constants $s_i>0$, $\kappa_i \in \mathbb{R}$, $\sigma_i >0$ , which represent, respectively, the initial value of $S^i$, the drift and the volatility of the asset $S^i$  and by $\theta_i = \frac{\kappa_i}{\sigma_i}$ the market price of risk for $i = 1, ..., d$.
We denote by $\sigma = (\sigma_1, ..., \sigma_d)$ and $\eta = (\eta_1, \dots, \eta_d)$.
The different maps $\eta_i: E\xrightarrow{}\mathbb{R}$ are Borel bounded maps specifying the sensitivity of jumps. Under the previous condition, we have the existence and uniqueness of the processes $S^i$ (see for e.g. \cite[Chap.  III Theorem 2.32]{jacod2013limit}). 
\begin{Remark}
    The use of a common Poisson random measure $N$ reflects the assumption that all assets are exposed to the same market-wide shocks such as macroeconomic announcements, credit events, or liquidity crises, but respond to them heterogeneously, each with its own jump size profile $\eta_i$. This models systematic jump risk.
\end{Remark} 
We define for $e \in E$, the matrix 
\begin{equation}\label{assump}
    \ta(e)  = (\eta_i(e)\eta_j(e))_{1\leq i, j\leq d}
\end{equation} 
 and we recall that it is non negative. In addition, we assume that $\nu(\{\eta_i<-1\}) = 0$, ensuring that each process $S^i$ remains nonnegative.

\section{Exponential utility maximization}\label{sect3}
\subsection{Investment strategies and related wealth}
For $i = 1, ..., d$, let denote by $\pi^i_t$  the wealth invested in the risky asset $S^i$ at time $t\in[0,T]$.
In the classical case, we consider  strategies  $\pi=(\pi^1_t, ..., \pi^d)_{t\in[0,T]}$ that are $\Pc$-measurable processes and satisfy 
\beq\label{cond-Adm-pi}
\sum_{i = 1}^d\int_{0}^{T}|\pi^i_{t}|^2 dt
   <   +\infty\;,\quad  \P-a.s.
\enq
A strategy $\pi=(\pi_t)_{t\in[0,T]}$ that is $\Pc$-measurable and  satisfies \eqref{cond-Adm-pi} is said to be admissible. We denote by $\Ac$ the set of admissible strategies.
For a given initial wealth $x\in \R$ and an investment strategy $\pi=(\pi_t)_{t\in[0,T]}\in \Ac$, we denote by $X^{x,\pi}_t$ the resulting self-financing wealth at time $t\in[0,T]$. The process $X^{x,\pi}$ satisfies the following self-financing dynamics
\beq\label{defDynX}
X^{x,\pi}_t  =  x + \sum_{i = 1}^d \int_0^t \frac{\pi^i_u}{S^i_{u-}} dS^i_u
\;,\quad t\in[0,T]\;.
\enq
We notice that $X^{x,\pi}$ is well defined for $\pi\in \Ac$ according to condition \eqref{cond-Adm-pi}. 

We consider investment strategies in the case where the investor has access to an identical extra information on all the stock $S^i$.  More precisely, we suppose that
the extra information on the stock $S^i$ is given by an impending signal process $G$ defined by
\begin{equation}
G_{t}   =  \int_{0}^{t} \int_{E}\gamma(e) \widetilde{N}(ds, de)\;,\quad t \geq 0\;, 
\end{equation}
where
 $\gamma:E \xrightarrow{}{\mathbb{R}}$ is a Borel map
 satisfying 
 \begin{equation}
\int_E |\gamma(e)|^2 \nu(de)  <  \infty.
 \end{equation}
We also assume that $\gamma(E)$ is a Borel subset of $\R$.
We now consider the set  of strategies including this additional information given by the signal. Namely, we would like to consider strategies 
\beqs
\pi=(\pi_t)_{t\in[0,T]} \text{ that are }\mathcal{P} \vee \sigma(G)\text{-measurable and satisfy  } \eqref{cond-Adm-pi}\;.
\enqs

By \cite[Lemma 2.2]{bank2022merton} a process $\pi$ is $\mathcal{P} \vee \sigma(G)$-measurable if and only if there exists maps $p^i:~\R_+\times\Omega\times\R\rightarrow\R$ which is 
 $\mathcal{P} \otimes \mathcal{B}(\mathbb R)$-measurable, such that
\beqs
\pi^i_t & = & p^i_t(\Delta_t G)\;,\quad t\geq0 ~\P-a.s.
\enqs for $i = 1, ..., d$
where $\Delta_t G=G_t-G_{t-}$ for $t\geq0$.
In particular, we can rewrite the set of such strategies as
\beqs
 \text{processes }(\pi_t=(p^1_t(\Delta G_t), ..., p^d_t(\Delta G_t))_{t\in[0,T]} \text{ satisfying  } \eqref{cond-Adm-pi} \mbox{ with } p^i  \text{ being }\mathcal{P} \otimes \Bc(\R)\text{-measurable} \;.
\enqs

We define the kernel $K:~\R\times \Bc(E)\rightarrow \R $ such that 
\begin{equation}
\nu\big(de \cap \{\gamma \neq 0\}\big) = \int_{\gamma(E) \setminus\{0\}} K(g,de)\mu(dg),  
\end{equation}
 where $\mu$ is the measure image of $\nu $ by $\gamma$, $i.e.$ $\mu = \nu \circ \gamma^{-1}$. To get such a kernel $K$, we consider the measure $M$ defined on $\Bc(\R)\otimes \Bc(\R)$ by
 \begin{equation}
M(A\times B)= \nu\big(\gamma^{-1}(A\setminus \{0\})\cap B\big)
 \end{equation}
 for any $A\in \mathcal{E}$ and $B\in\Bc(\R)$. Applying \cite[Chapter II, Paragraph 1.2]{jacod2013limit}, we get the existence of such a kernel $K$ as the first marginal of $M$ is  $\mu\mathds{1}_{\gamma(\R)\setminus\{0\}}$. Moreover, we have $K(g,E)=K(g,\{\gamma=g\})=1$ for all $g\in \gamma(E)\setminus\{0\}$.
 
Finally, we shall consider the set of admissible strategies with signal $\Ac_{\mathrm{sgn}}$ defined by
\beqs
\Ac_{\mathrm{sgn}} & = & \Big\{\text{Processes }(\pi_t=(p^1_t(\Delta G_t), ..., p^d_t(\Delta G_t)))_{t\in[0,T]} \text{ satisfying  } \eqref{cond-Adm-pi} \mbox{ with } p^1, ..., p^d  \text{ being }\mathcal{P} \otimes \Bc(\R)\text{-measurable}\Big\} \;.
\enqs
For $\pi\in \Ac_{\mathrm{sgn}}$ and for an initial endowment $x\in\R$, the self financing wealth process $ X^{x,\pi}$ defined by $ X_0^{x,\pi}=x$ and 
\begin{equation}\label{defDynX}
           d X_{t}^{x, \pi}  
           = \sum_{i=1}^d \pi^i_t[\kappa_i dt + \sigma_i dW^i_t]  +  \int_{E}\pi^i_t\eta_i(e) \widetilde{N}(dt, de),\quad  t\in [0, T].
\end{equation}
For such a strategy $\pi$, we aim to define a portfolio value process $X^{x,\pi}$
satisfying \eqref{defDynX}. This requires making sense of the stochastic integral
of $\pi$ with respect to $N$. In general, such a stochastic integral is only defined
for $\mathcal{P}$-measurable integrands; however, since $\pi$ possesses additional
measurability, and the measure $\nu$ is finite, the integral reduces to a finite sum
and is therefore well-defined path-wise. It is important to note that for each $i = 1, \ldots, d$, the stochastic integral of
$\pi^i\eta_i$ with respect to $\widetilde{N}$ is not a martingale, but only a semi-martingale. More precisely we have the following decomposition
\begin{align*}
    \int_0^t \int_{E} \pi^i_s\,\eta_i(e)\,\widetilde{N}(ds, de)
    &= \int_0^t\int_{E} p^i_s(\gamma(e))\,\eta_i(e)\,\widetilde{N}(ds,de) \\
    &\quad + \int_0^t\int_{E}
      \bigl(p_s^i(\gamma(e)) - p_s^i(0)\bigr)\eta_i(e)\,\nu(de)\,ds,
    \quad t \geq 0.
\end{align*}
\subsection{The utility maximization problem and related BSDE}

We 
 consider an exponential utility function $U_{\lambda}:~\R\rightarrow\R$ given by 
\begin{equation}
    U_{\lambda}(x) = -\exp(-\lambda x)\;,\quad x\in\R\;,
\end{equation}
where $\lambda$ is a given positive constant which quantify the risk aversion of the investor.

We also consider an $\mathcal{F}_T$-measurable random variable $F$ which can represent the payoff of some financial product. 
We then look for the optimal expected utility for an investor selling the product $F$. This remains to maximize $\E\big[U_{\lambda}(X^{x,\pi}_T - F)\big]$ over the set of admissible strategies. 

We then denote by $\bar \Ac_{\mathrm{sgn}}$  the subset of $\Ac_{\mathrm{sgn}}$ (resp.  $\bar \Ac$  the subset of $\Ac$) representing the set of allowed strategies with signal (resp. without signal).

To define this set, we shall also impose an additional condition 
on $\pi$. More precisely, we fix some constants $\overline \pi, \underline \pi>0$ and we ask $\pi$ to satisfy
\beq
\label{limited-credit-line}
\pi_t & \in & C_d = [-\underline \pi,\overline \pi]^d \quad \mbox{ for all }t\in[0,T]\;,~\P-a.s.
\enq
We observe that under \reff{limited-credit-line}, conditions \eqref{cond-Adm-pi} is satisfied. Therefore, the sets  $\bar \Ac_{\mathrm{sgn}}$ and $\bar \Ac$  are therefore given by
\begin{align*}
\bar{\mathcal{A}}_{\mathrm{sgn}} &= \Bigl\{ 
    \pi = \bigl(p^1_t(\Delta G_t), \ldots, p^d_t(\Delta G_t)\bigr)_{t\in[0,T]} 
    \;\Big|\; 
    p^1, \ldots, p^d \text{ are } \mathcal{P}\otimes\mathcal{B}(\mathbb{R})\text{-measurable}
    \text{ and satisfy } \eqref{limited-credit-line}
\Bigr\},\\[6pt]
\bar{\mathcal{A}} &= \Bigl\{ 
    \pi = (\pi_t)_{t\in[0,T]} 
    \;\Big|\; 
    \pi \text{ is } \mathcal{P}\text{-measurable and satisfies } \eqref{limited-credit-line}
\Bigr\}.
\end{align*}
We then define  
the value functions $V_{\mathrm{sgn}}$ and $V$ by
\beq\label{defV(x)}
    V_{\mathrm{sgn}}(x) & = & \sup_{\pi \in \bar\Ac_{\mathrm{sgn}}}\E\big[U_{\lambda}(X^{x,\pi}_T - F)\big]\;,
\enq 
and
\beq\label{defVw(x)}
    V(x) & = & \sup_{\pi \in \bar\Ac}\E\big[U_{\lambda}(X^{x,\pi}_T - F)\big]\;,
\enq 
for $x\in\mathbb{R}$. Let recall that the study of the utility maximization problem with signal \reff{defV(x)} is a generalization of the one dimensional case of \cite{MAM08} for the study of the utility maximization problem without signal  \reff{defVw(x)}.

For a given initial endowment $x\in\mathbb{R}$, we shall also look for  a related optimal strategy $\pi^*\in \bar{\mathcal{A}}_{\mathrm{sgn}}$ such that 
\begin{equation}\label{defOI}
    V_{\mathrm{sgn}}(x) = \E\big[U_{\lambda}(X^{x,\pi^*}_T - F)\big].
\end{equation}

To solve problem \eqref{defV(x)}-\eqref{defOI}, we follow the martingale optimality approach presented in \cite{hu2005utility}. This approach consists in the characterization of the optimality by a martingale criterion as shown by the following result.
\begin{Proposition}\label{MgOptPr}
    Suppose that there exists a family of processes $(R^\pi)_{\pi\in \bar \Ac_{\mathrm{sgn}}}$ satisfying the following conditions. 
\begin{enumerate}[(i)]
\item $R_T^\pi= U_{\lambda}(X^{x,\pi}_T-F)$ for any $\pi\in \bar \Ac_{\mathrm{sgn}}$.
\item There is some constant $R_0$ such that $R^\pi_0=R_0$ for any $\pi\in\bar \Ac_{\mathrm{sgn}}$.
\item The process $(R^\pi_t)_{t\in[0,T]}$ is a supermartingale for any $\pi\in \bar \Ac_{\mathrm{sgn}}$.
\item There is some $\pi^*\in\bar \Ac_{\mathrm{sgn}}$ such that $(R^{\pi^*}_t)_{t\in[0,T]}$ is a martingale.
\end{enumerate}
Then $R_0=V_{\mathrm{sgn}}(x)$ and $\pi^*$ is an optimal investment strategy.
\end{Proposition}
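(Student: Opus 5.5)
This is the standard martingale optimality argument, and I would prove it by comparing expected terminal values. The plan is to show two inequalities: first, that $\E[U_\lambda(X^{x,\pi}_T-F)]\le R_0$ for every $\pi\in\bar\Ac_{\mathrm{sgn}}$; second, that equality holds for $\pi^*$. Together these identify $R_0$ with the supremum $V_{\mathrm{sgn}}(x)$ in \eqref{defV(x)} and show that this supremum is attained at $\pi^*$, which is exactly \eqref{defOI}.

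\textbf{Upper bound.} Fix $\pi\in\bar\Ac_{\mathrm{sgn}}$. By (iii), $R^\pi$ is a supermartingale, so $\E[R^\pi_T]\le \E[R^\pi_0]$. By (ii), $R^\pi_0=R_0$ is a constant, hence $\E[R^\pi_0]=R_0$. By (i), $R^\pi_T=U_\lambda(X^{x,\pi}_T-F)$. Combining these gives $\E[U_\lambda(X^{x,\pi}_T-F)]\le R_0$, and taking the supremum over $\pi$ yields $V_{\mathrm{sgn}}(x)\le R_0$.

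\textbf{Attainment.} By (iv), $R^{\pi^*}$ is a martingale, so $\E[R^{\pi^*}_T]=R^{\pi^*}_0=R_0$. Using (i), this reads $\E[U_\lambda(X^{x,\pi^*}_T-F)]=R_0$. Since $\pi^*\in\bar\Ac_{\mathrm{sgn}}$, we get $R_0\le V_{\mathrm{sgn}}(x)$. Hence $R_0=V_{\mathrm{sgn}}(x)=\E[U_\lambda(X^{x,\pi^*}_T-F)]$, so $\pi^*$ is optimal.

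\textbf{Main point of care.} The supermartingale property in (iii) is understood to include integrability of each $R^\pi_t$, so the expectations above are well defined. The only other subtlety is which filtration the (super)martingale property refers to, given that strategies are $\mathcal{P}\vee\sigma(G)$-measurable. This does not matter here: the argument only compares times $0$ and $T$ through unconditional expectations, and $\mathcal{F}_0$ (or $\mathcal{F}_0\vee\sigma(G)$ restricted to time $0$) makes $R_0$ deterministic by (ii). Nothing beyond these two observations is needed. In particular, no integrability issue arises for $U_\lambda(X^{x,\pi}_T-F)\le 0$, since its expectation is either finite or $-\infty$, and in either case it is consistent with the bound.
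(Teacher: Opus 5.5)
Your proof is correct and follows essentially the same argument as the paper's: (i)--(iii) give $\E[U_\lambda(X^{x,\pi}_T-F)]\le R_0$ for every $\pi\in\bar\Ac_{\mathrm{sgn}}$, and (iv) gives equality at $\pi^*$, which identifies $R_0$ with $V_{\mathrm{sgn}}(x)$ and shows that $\pi^*$ is optimal. Your remarks on integrability and on the choice of filtration go beyond what the paper states, but they are harmless and change nothing in the argument.
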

\begin{proof}
    Fix some $\pi\in \bar \Ac_{\mathrm{sgn}}$. We then have from (i), (ii) and (iii)
    \beqs
\E[U_{\lambda}(X^{x,\pi}_T-F)] & = & \E[R^{\pi}_T]~\leq~R^{\pi}_0~=~R_0\;.
    \enqs
    Therefore, we get
    \beqs
    \sup_{\pi\in\bar \Ac_{\mathrm{sgn}}}\E\big[U_{\lambda}(X^{x,\pi}_T-F)\big]~\leq~R_0\;.
    \enqs
    Using (iv), we get
    \beqs
    \E\big[U_{\lambda}(X^{x,\pi^*}_T-F)\big]~=~R_0\;.
    \enqs
    Hence $\pi^*$ is optimal and $V_{\mathrm{sgn}}(x)=R_0$\;.
\end{proof}
In the sequel, we construct  a solution to problem \eqref{defV(x)}-\eqref{defOI} based on a family $(R^\pi)_{\pi\in \bar \Ac_{\mathrm{sgn}}}$ satisfying Proposition \ref{MgOptPr}. 

For this purpose, we use the theory of Backward SDEs with jumps (BSDEJ) for short. We next present the notion of BSDEJ and we refer to \cite{delong13} for a detailed presentation of this theory.  
We introduce the following spaces of processes and maps. 


\begin{itemize}
    \item \( S^{\infty} = \left\{ \F\text{-adapted c\`adl\`ag processes } Y  \text{ valued in $\R$ such that } \esssup_{t \in [0,T]} |Y_t| < \infty \right\} \).
    
    \item \( L^2(W) = \left\{\Pc\text{-measurable processes } Z \text{ valued in $\R^d$ such that } \mathbb{E} \left[\int_0^T |Z_t|^2 \, dt \right] < \infty \right\} \).
    
    \item \( L^2(\widetilde{N}) = \left\{\mathcal{P} \otimes \mathcal{B}(E)\text{-measurable processes } U \text{ valued in $\R$ such that } \mathbb{E} \left[\int_0^T \int_E U_t^2(e) \, \nu(de) \, dt \right] < \infty \right\} \).
    
    \item $L^2(\nu) = \{ u : E \xrightarrow{} \mathbb{R} \mbox{ Borel, } \int_{E}u^2(e)\nu(de) <\infty \}$. 
    \item $L^\infty(\nu) = \{ u : E \xrightarrow{} \mathbb{R} \mbox{ Borel and bounded}\}$.
\end{itemize}

Next, we fix a terminal condition given an $\Fc_T$-measurable random variable $\xi$ and a function $f:~\Omega\times [0,T]\times \R\times \R^d\times L^2(\nu)$ that is assumed to be $\Pc\otimes\Bc(\R)\otimes\Bc(\R^d)\otimes\Bc(L^2(\nu))$-measurable.

A solution to the BSDEJ with parameters $(\xi,f)$ is a triple of processes $(Y, Z, U)\in S^\infty \times L^2(W) \times L^2(\widetilde{N})$ satisfying 
\begin{equation}
Y_t  =  \xi + \int_{t}^{T} f(s, Y_s, Z_s, U_s) \, ds 
- \int_{t}^{T} Z_s \, dW_s 
- \int_{t}^{T} \int_{E} U_s(e) \, \widetilde{N}(ds,de), \quad t \in [0, T]\;.\label{BSDEJ}
\end{equation}

Following the approach initiated by \cite{hu2005utility} in the Brownian framework and then developed by \cite{MAM08} in the mixed Brownian-Poisson case, we look for the family $(R^\pi_t)_{t\in[0,T]}^{\pi\in \bar\Ac_{\mathrm{sgn}}}$ satisfying Proposition \ref{MgOptPr} under the following form
\begin{equation}
R^\pi_t  =  U_{\lambda}\big(X^{\pi}_t-Y_t\big)\;,\quad t\in[0,T]\;,
\end{equation}
where $(Y_t)_{t\in[0,T]}$ satisfies a BSDE with some parameters $(\xi,f)$. We next face two questions. The first one consists in finding appropriate $\xi$ and $f$ to satisfy conditions (i) to (iv) of Proposition \ref{MgOptPr}. Given, this appropriate coefficient, the second question is to prove existence, and possibly uniqueness, of solution to the considered BSDEJs. 
\vspace{2mm}

For $u \in L^2(\nu)$ and $\lambda > 0$, we define the following non-negative functional:
\begin{equation}
|u|_{\lambda} := \int_E h_{\lambda}(u(e))\, \nu(de),
\end{equation} where $h_\lambda$ is the function defined by $h_\lambda(x) = \frac{e^{\lambda x} - \lambda x - 1}{\lambda}$ for all $x\in \mathbb{R}$.




Following the approach described in the previous section, we construct the family of processes $(R^\pi)_{\pi\in \bar \Ac_{\mathrm{sgn}}}$ introduced in Proposition \ref{MgOptPr}, by setting
\begin{equation}\label{defRpi}
R^{\pi}_{t} = -\exp\left(-\lambda\left(X^{x,\pi}_{t} - Y_t\right)\right), \quad t \in [0, T], \; \pi \in \bar \Ac_{\mathrm{sgn}},
\end{equation}
where $(Y, Z, U)$ is a solution to the BSDEJ \eqref{BSDEJ}. From condition (i) of Proposition \ref{MgOptPr}, we need to have $\xi=F$. Therefore $(Y, Z, U)$ satisfies 
\begin{equation}\label{BSDEexp}
Y_t = F + \int_{t}^{T} f(s, Z_s, U_s) \, ds - \sum_{i =1}^d\int_{t}^{T} Z^i_s \, dW^i_s - \int_{t}^{T} \int_{\R} U_s(e) \, \widetilde{N}(ds, de), \quad t \in [0, T].   
\end{equation}


    
    


In order to compute $f$, we apply Ito formula to $R^{\pi}$. Define the process $L^\pi$ by $L^\pi_t = X^{x, \pi}_t - Y_t$, we have 
\beqs
dL^\pi_t &= &  \left[ \sum_{i=1}^d p^i_t(0) (\kappa_i - \int_E \eta_i(e)\nu(de)) + \int_E U_t(e)\nu(de) + f(t, Z_t, U_t) \right] dt  +  \sum_{i = 1}^d \left[ p^i_t(0) \sigma_i - Z^i_t\right] dW^i_t 
\\
& &  + \int_{E} \Big(\sum_{i=1}^d p^i(\gamma(e))_t\eta_i(e) - U_t(e) \Big)N(dt, de) 
\;,\quad t\in[0,T]\;.
\enqs
Due to the correlation, the quadratic variation of the continuous part is given by
\begin{equation*}
d\langle L^{\pi,c} \rangle_t 
= \sum_{i=1}^d \left(p_t^i(0)\,\sigma_i - Z_t^i\right)^2 \, dt 
+ \sum_{\substack{i,j=1 \\ i \neq j}}^d 
\left(p_t^i(0)\,\sigma_i - Z_t^i\right)
\left(p_t^j(0)\,\sigma_j - Z_t^j\right)
\rho_{ij} \, dt.
\end{equation*}


Let denote by $\tilde{\kappa_i} = \kappa_i - \int_E\eta_i(e)\nu(de)$, finally applying  Itô's formula to $R^\pi=-\exp(-\lambda L^\pi)$ gives

\begin{align}
dR_t^\pi 
&= R_{t-}^\pi \Bigg[
-\lambda \Bigg(
\sum_{i=1}^d p_t^i(0)\,\tilde{\kappa_i} + \int_E U_t(e)\nu(de)
+ f(t,Z_t,U_t)  
- \int_{E} \Big( \sum_{i=1}^d p_t^i(\gamma(e))\,\eta_i(e) - U_t(e) \Big)\,\nu(de)
\Bigg) dt \nonumber \\
&\qquad\quad 
+ \frac{1}{2}\lambda^2 \, d\langle L^c \rangle_t
- \lambda \sum_{i=1}^d \big( p_t^i(0)\,\sigma_i - Z_t^i \big)\, dW_t^i \nonumber \\
&\qquad\quad 
+ \int_{E} \Big(
\exp\Big( -\lambda \big( \sum_{i=1}^d p_t^i(\gamma(e))\,\eta_i(e) - U_t(e) \big) \Big) - 1
\Big)\, N(dt,de)
\Bigg],
\quad t \in [0,T].
\end{align}

Therefore, we get
\beqs
dR_t^\pi &= & R_{t-}^\pi \Big[ \Lambda_t dt +\sum_{i=1}^d \Gamma^i_t dW^i_t+\int_E \Theta_t(e) N(de,dt)\Big]
\enqs
with
\beqs
\Gamma^i_t & = & - \lambda \left(p^i_t(0)\sigma_i - Z^i_t\right) , \mbox{ for } i = 1,..., d \\
\Theta_t(e) & = &  \exp\left(-\lambda(\sum_{i=1}^d p^i_t(\gamma(e))\eta_i(e) - U_t(e))\right) - 1 \\
\Lambda_t & = & -\lambda \Bigg(
\sum_{i=1}^d p_t^i(0)\,\tilde{\kappa_i}
+ f(t,Y_t,Z_t)  
+ \int_{E} U_t(e) \nu(de)
\Bigg) dt
+ \frac{1}{2}\lambda^2 \, d\langle L^{\pi,c} \rangle_t  
\enqs
for $t\in[0,T]$ and $e\in E  $.
%

To satisfy condition (iii) of Proposition \ref{MgOptPr}, we impose to the  drift part of $R^\pi_t$ to be negative, which gives
\beqs
-\lambda \Bigg(
\sum_{i=1}^d p_t^i(0)\,\tilde{\kappa_i}
+ f(t,Y_t,Z_t)  
+ \int_{E} U_t(e) \nu(de)
\Bigg) dt
+ \frac{1}{2}\lambda^2 \, d\langle L^{\pi,c} \rangle_t   +\int_{E} \Theta_t(e) \nu(de)
 & \geq  & 0.
\enqs
Using the identity 
\begin{equation*}
    \frac{\lambda}{2}(p^i(0)\sigma_i - Z^i)^2 - p^i \kappa_i = \frac{\lambda}{2}[p^i(0) \sigma_i - (Z^i + \frac{\theta_i}{\lambda})]^2 - Z^i \theta_i - \frac{(\theta_i )^2}{2 \lambda}
\end{equation*}
we have
\begin{align*}
 -\lambda f(t, Z_t, U_t) 
&+ \frac{1}{2} \lambda^2 \left( \sum_{i=1}^d \left[ p^i_t(0)\sigma_i - \left( Z^i_t + \frac{\theta_i}{\lambda} \right) \right]^2 + \sum_{\substack{i,j=1 \\ i \neq j}}^d 
\left( p_t^i(0)\,\sigma_i - Z_t^i \right)
\left( p_t^j(0)\,\sigma_j - Z_t^j \right)
\rho_{ij} \right) \\
&- \sum_{i=1}^d \left( \lambda Z^i_t \theta_i + \frac{(\theta_i)^2}{2} \right)
+ \int_{E} \lambda [h_{\lambda}\!\left( U_t(e) - \sum_{i=1}^d p^i_t(\gamma(e))\, \eta_i(e) \right) -  \sum_{i=1}^d p^i_t(\gamma(e))\eta_i(e)]
\nu(de) \geq 0\,.
\end{align*}

Using the disintegration formulation on the set $\{\gamma \neq 0\}$, we can rewrite the previous inequality as
\begin{align*}
-\lambda f(t, Z_t, U_t) 
&+ \frac{1}{2}\lambda^2 \Bigg(
    \sum_{i=1}^d \left[ p_t^i(0)\sigma_i - \left( Z_t^i + \frac{\theta_i}{\lambda} \right) \right]^2 
    + \sum_{\substack{i,j=1 \\ i \neq j}}^d 
    \left( p_t^i(0)\sigma^i - Z_t^i \right)
    \left( p_t^j(0)\sigma^j - Z_t^j \right)
    \rho_{ij}
\Bigg) \\
&- \sum_{i=1}^d \left( \lambda Z_t^i \theta_i - \frac{\theta_i^2}{2} \right) + \int_{\gamma =0} 
    \lambda [h_{\lambda}\!\left( U_t(e) - \sum_{i=1}^d p_t^i(0)\,\eta_i(e) \right)\, - \sum_{i=1}^d p^i_t(\gamma(e))\eta_i(e))]\nu(de) \\
&+ \int_{\gamma(E)\setminus\{0\}} 
    \int_{\gamma = g} 
    \lambda [h_{\lambda}\!\left( U_t(e) - \sum_{i=1}^d p_t^i(\gamma(e))\,\eta_i(e) \right)
    \, - \sum_{i=1}^d p^i_t(\gamma(e))\eta_i(e))]K(g,de)\, \mu(dg) \geq 0.
\end{align*}
To satisfy the previous inequality, 
we define the driver $f$ by
\begin{align}
f(z, u) =  \inf_{p \in C_d} f^1(z,u,p)
\label{defDriverfexp}
 + \int_{\gamma(E) \setminus \{0\}} \inf_{p \in C_d} f^2(g,u,p)
\mu(dg) 
 - \sum_{i=1}^d \left( z^i \theta_i + \frac{\theta_i^2}{2\lambda} \right)
\end{align}
with
\begin{align}
f^1(z, u, p) &= \frac{1}{2} \lambda \left[ \sum_{i=1}^d \left( p^i\sigma_i - \left( z^i + \frac{\theta_i}{\lambda} \right) \right)^2 
+ \sum_{\substack{i,j=1 \\ i \neq j}}^d 
\left( p^i\sigma_i - z^i \right)
\left( p^j\sigma_j - z^j \right)
\rho_{ij} \right] \notag \\
&\quad  + \int_{\gamma = 0} 
[h_{\lambda}\!\left( u(e) - \sum_{i=1}^d p^i\,\eta_i(e) \right) -  \sum_{i=1}^d p^i\eta_i(e)]\nu(de) \notag \\
&\quad 
\end{align}
and
\beqs
f^2( g,u,p) & = & 
\int_{\gamma = g} 
    [h_{\lambda}\!\left( u(e) - \sum_{i=1}^d p^i\,\eta_i(e) \right) -  \sum_{i=1}^d p^i\eta_i(e)] K(g,de)\
\enqs
for $z\in\R^d$, $u\in (L^2\cap L^\infty)(\nu)$, $g\in\gamma(E)$ and $p\in\R^d$.\\
We notice that $f$ does not depend on the component $Y$ which is a main feature of utility maximization problem. \\
    
 
\subsection{Characterization of the optimal strategies}
\begin{Theorem}
    Suppose that the Backward SDE \eqref{BSDEJ} with parameters $\xi=F$ and $f$ given by \eqref{defDriverfexp} admits a solution $(Y,Z,U)\in S^\infty \times L^2(W) \times L^2(\widetilde{N})$. Then, there exists a Borel map 
   $p^*:~\R\times\R^d\times  L^2(\nu)\rightarrow\R^d$  satisfying
\begin{equation} \label{mindriveratpistar}
f(z, u)  =  f^1(z,u,p^*(0,z,u))+
\int_{\gamma(E) \setminus \{0\}}  f^2(g,u,p^*(g,z,u)) 
\mu(dg)
- \sum_{i=1}^d \left( z^i \theta_i + \frac{(\theta_i)^2}{2\lambda} \right)
\end{equation}
and
\beqs
    p^*(g,z,u) \in   C_d 
\enqs
for all $g,z,u\in\gamma(E)\times\R ^d\times (L^2\cap L^\infty)(\nu)$, and we have $V(x)=U_{\lambda}(x-Y_0)$ and $\pi^*=(p^*(\Delta G_t,Z_t,U_t))_{t\in[0,T]}$ is an optimal strategy in $\bar \Ac_{\mathrm{sgn}}$.
\end{Theorem}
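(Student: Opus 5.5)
The plan has two parts: first build $p^*$ by a measurable selection, then check conditions (i)--(iv) of Proposition \ref{MgOptPr} for the family $R^\pi$ defined in \eqref{defRpi}.

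\emph{Construction of $p^*$.} For fixed $(z,u)\in\R^d\times(L^2\cap L^\infty)(\nu)$ the map $p\mapsto f^1(z,u,p)$ is continuous on the compact set $C_d$. Indeed, the quadratic part is a polynomial in $p$. The integral part is continuous by dominated convergence, because $u$ and the $\eta_i$ are bounded, $p$ ranges over a compact set, $h_\lambda$ is continuous and $\nu$ is finite. Hence the infimum over $C_d$ is attained. The same argument applies to $p\mapsto f^2(g,u,p)$ for each $g\in\gamma(E)\setminus\{0\}$, since $K(g,\cdot)$ is a probability measure. The maps $(z,u,p)\mapsto f^1$ and $(g,u,p)\mapsto f^2$ are Carathéodory: jointly Borel, and continuous in $p$. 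I would therefore invoke a measurable selection theorem for minimizers of Carathéodory functions over a compact set (Kuratowski--Ryll-Nardzewski, or Aliprantis--Border, Thm.~18.19). This gives Borel maps $p^{*,1}(z,u)$ and $p^{*,2}(g,u)$ valued in $C_d$ that attain the infima. Set $p^*(0,z,u)=p^{*,1}(z,u)$ and $p^*(g,z,u)=p^{*,2}(g,u)$ for $g\neq0$. Then \eqref{mindriveratpistar} holds by the definition \eqref{defDriverfexp}. Since $\mu$ is finite and $f^2$ is bounded on these sets, the $\mu(dg)$-integral is well defined. The process $\pi^*_t=p^*(\Delta G_t,Z_t,U_t)$ has the form $p_t(\Delta G_t)$ with $p$ being $\Pc\otimes\Bc(\R)$-measurable, because $Z$ and $U$ are predictable. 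It is $C_d$-valued, so $\pi^*\in\bar\Ac_{\mathrm{sgn}}$.

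\emph{Martingale optimality.} Condition (i) holds because $Y_T=F$. Condition (ii) holds with $R_0=-e^{-\lambda(x-Y_0)}$. The Itô computation preceding the theorem, combined with the disintegration via $K$, writes $R^\pi=R^\pi_0\,\mathcal E(M^\pi)\,e^{A^\pi}$. Here $M^\pi$ is the local martingale
\[
M^\pi_t=\sum_i\int_0^t\Gamma^i\,dW^i+\int_0^t\int_E\Theta\,\widetilde N(ds,de),
\]
and $A^\pi$ is the drift. The choice of $f$ as the infimum makes $dA^\pi_t\ge0$. Since $R^\pi<0$, this yields the supermartingale direction once $\mathcal E(M^\pi)$ is a true martingale. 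For $\pi^*$ the drift vanishes identically by \eqref{mindriveratpistar}, which gives (iv). Proposition \ref{MgOptPr} then yields $V_{\mathrm{sgn}}(x)=R_0=U_\lambda(x-Y_0)$. (The statement writes $V(x)$; the argument gives the value with signal.)

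\emph{Main obstacle: integrability.} The key difficulty is showing that $\mathcal E(M^\pi)$ is a true martingale, uniformly over $\pi\in\bar\Ac_{\mathrm{sgn}}$. I would use Lemma \ref{Kazamaki_lemma}. The jumps of $M^\pi$ are $\Theta=\exp(-\lambda(\sum_i p^i\eta_i-U))-1$. This is bounded, and bounded below by $-1+\delta$, once $U$ is shown to be bounded. Boundedness of $U$ follows from $Y\in S^\infty$, since $\Delta Y=U$ gives $|U|\le2\|Y\|_\infty$ $\nu$-a.e.; this also justifies restricting $u$ to $L^\infty(\nu)$. The BMO property of the Brownian part needs $\int Z\,dW$ to be BMO. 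I would prove this in the standard way: apply Itô's formula to $e^{\beta Y}$ for large $\beta$, or to $Y^2$, between a stopping time $\tau$ and $T$. The driver has quadratic growth in $z$ and is bounded in $u$ on bounded sets, and $Y$ is bounded, so this yields $\E^{\Fc_\tau}\int_\tau^T|Z|^2ds\le c$. Bounded $p^i$ then makes $\int\Gamma\,dW$ BMO, and the non-singularity of $\Sigma$ controls the cross terms. One subtlety remains. The jump integral of $\pi$ is only a semimartingale, not an $\F$-martingale, because $p(\gamma(e))$ is not predictable. I would therefore write the jump term as an integral of the predictable integrand $e\mapsto\Theta$ evaluated at $p_t(\gamma(e))$, i.e.\ as a function of $(t,e)$, against $\widetilde N$, plus its compensator. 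This makes $M^\pi$ a genuine $\F$-martingale with the drift absorbed into $A^\pi$, and that is exactly how $f^2$ arose.
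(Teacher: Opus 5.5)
Your proof is correct and uses the same martingale-optimality skeleton as the paper, but two steps take different routes.

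\emph{Selection of $p^*$.} The paper gets minimizers from the Hessians (strict convexity of $f^1$, convexity of $f^2$). It then argues Borel measurability by induction on $d$, through the first-order-condition sets $A_1,B_1,C_1$. You use only continuity in $p$ on the compact set $C_d$ and the measurable maximum theorem (Aliprantis--Border, Thm.~18.19), which is what the paper itself does in the power/log case. Your route needs no convexity and handles the coupled $d$-dimensional minimization directly, which the coordinatewise induction does not really capture. The paper's convexity adds only uniqueness of the $f^1$-minimizer, which is not needed here.

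One caveat, shared with the statement itself: $K(g,\cdot)$ lives on $\{\gamma=g\}$, which may be $\nu$-null. So $f^2(g,u,p)$ is not a function of the $L^2(\nu)$-class of $u$. The selection is safest done in $(\omega,t,g)$ with $U_t(\omega,\cdot)$ inserted, which gives a $\Pc\otimes\Bc(\R)$-measurable $p$ directly.

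\emph{Conditions (iii)--(iv).} The paper cites uniform integrability of $R^\pi$ (Morlais' Lemma~1) and upgrades the local (super)martingales by dominated convergence. You instead factor $R^\pi=R_0\,\mathcal{E}(M^\pi)e^{A^\pi}$ and show $\mathcal{E}(M^\pi)$ is a true martingale via BMO and Kazamaki; this is the paper's power-utility argument transplanted. It costs you the extra BMO estimate for $Z$ (It\^o on $e^{\beta Y}$ with the upper bound of Lemma~\ref{lem-ref-f}(i)) and the bound $|U|\le 2\|Y\|_\infty$. Uniform integrability is cheap here by comparison: $\sup_t|R^\pi_t|$ is integrable from exponential moments of bounded-integrand Brownian integrals and of $N([0,T]\times E)$. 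In exchange, your factorization makes the sign bookkeeping transparent. Note that it gives $\E[R^\pi_t\mid\Fc_s]\le R^\pi_s$ a priori only in the extended sense; integrability of $R^\pi_t$ comes from that same domination. In any case $\E[R^\pi_T]\le R_0$ is all that Proposition~\ref{MgOptPr} actually uses.
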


\begin{proof}\label{iden_signal_case}
Let $(Y,Z,U)\in S^\infty \times L^2(W) \times L^2(\widetilde{N})$  be solution to BSDEJ \eqref{BSDEexp} with $f$ given by \eqref{defDriverfexp} and consider the family $(R^\pi)_\pi$ defined by \eqref{defRpi}. We prove that the conditions of Proposition \ref{MgOptPr} are satisfied.

The family $(R^\pi)_\pi$  satisfies (i) and (ii) of Proposition \ref{MgOptPr} as $R^\pi_T=U_{\lambda}(X_T^{x,\pi}-Y_T)=U_{\lambda}(X_T^{x,\pi}-F)$ and $R_0^\pi=U_{\lambda}(X_0^{x,\pi}-Y_0)=U_{\lambda}(x-Y_0)$ for any $\pi\in \bar \Ac_{\mathrm{sgn}}$.  From the definition of $f$ and the computations made above, we observe that $R^\pi$  is a nonpositive local supermartingale. We use lemma 1 in \cite{MAM08} which holds due to boundness of the parameters of the model and finiteness of the Levy measure $\nu$ to obtain uniform integrability of $R^\pi$.
Hence by dominated convergence theorem, it is a supermatingale and (iii) of  Proposition \ref{MgOptPr} is satisfied. 

Suppose there exists a Borel function $p^*$ satisfying 
\eqref{mindriveratpistar}. We show that condition \textit{(iv)} is 
satisfied with $\pi^*$. For $\pi = \pi^*$, $(R^{\pi^*}_t)$ is a martingale because it is a local martingale uniformly integrable. 

It remains to show that there exists a Borel function $p^*$ satisfying \reff{mindriveratpistar}. 

Using the dominated convergence theorem, we get that $f^1$ and $f^2$ are twice continuously differentiable w.r.t. $p$ on $\gamma(E)\times \R^d\times L^2(\nu)$ and we have Hessian matrix
\begin{align}
D^2_{p} f^1(z,u,p)
&= \lambda \Bigg(
\operatorname{diag}(\sigma)
+ \int_{\gamma = 0} \ta(e)\,\exp\big(\lambda(u(e) - \langle p, \eta(e) \rangle\big)\,\nu(de)
\Bigg) > 0, \\
D^2_{p} f^2(g,u,p)
&= \int_{\gamma = g} \ta(e)\,\exp\big(\lambda(u(e) - \langle p, \eta(e) \rangle\big)\,K(g,de) \ge 0,
\end{align}
for all $(g,z,u,p)\in \gamma(E)\times \mathbb{R}^d \times (L^2\cap L^\infty)(\nu)\times \mathbb{R}^d$.

This shows that  the functions $p\mapsto f^1(z,u,p)$ and $p\mapsto f^2(g,u,p)$ are respectively strictly convex and convex for any $z\in\R$, $u\in L^2(\nu)$ and $g\in \gamma(E)$. Therefore there exits a function $p^*$ satisfying \reff{mindriveratpistar}.\\We are going to prove by induction on $d$ that the set containing $p^*$ 
is a Borel set when the signal is zero; the complementary case is treat similarly.

When $d=1$, this property is satisfied (cf.\ the proof of the theorem 5.1 in \cite{turki2026portfolio}).

Let $d$ be such that the property is true and denote by $\mathcal{P}^d$ 
the containing set. Let us denote by 
\begin{equation}
\tilde{p}(0,z,u) = \big(p^*_1(0,z,u),\ldots,p^*_d(0,z,u),\, 
p^*_{d+1}(0,z,u)\big)
\end{equation}
the optimal strategy in dimension $d+1$.

By optimality criteria $p^*_{d+1}(0,z,u)$ belongs to the set $A_1 \cup B_1 \cup C_1$, where
\begin{align*}
A_1 &= \left\{(0,z,u,p) ~:~ (z,u,p)\in \mathbb{R}^{d+1}\times 
\mathcal{O}_1 \times \mathbb{R} \mbox{ and } 
\frac{\partial f^1}{\partial p_{d+1}}(z,u,p)=0 \right\}, \\[6pt]
B_1 &= \left\{(0,z,u,-\underline{\pi}) ~:~ (z,u)\in \mathbb{R}^{d+1}
\times \big(L^2(\nu)\setminus\mathcal{O}_1\big) 
\mbox{ and } 
\frac{\partial f^1}{\partial p_{d+1}}(z,u,-\underline{\pi})\geq 0 
\right\}, \\[6pt]
C_1 &= \left\{(0,z,u,\overline{\pi}) ~:~ (z,u)\in \mathbb{R}^{d+1}
\times \big(L^2(\nu)\setminus\mathcal{O}_1\big) 
\mbox{ and } 
\frac{\partial f^1}{\partial p_{d+1}}(z,u,-\underline{\pi}) < 0 
\right\},
\end{align*}
with
\begin{align*}
\mathcal{O}_1 &= \left\{ (z,u)\in \R^{d+1}\times 
L^2(\nu)\ ~:~ 
\frac{\partial f^1}{\partial p_{d+1}}(z,u,-\underline{\pi})
\cdot
\frac{\partial f^1}{\partial p_{d+1}}(z,u,\overline{\pi}) 
< 0 \right\}.
\end{align*}

We then notice that the sets $A_1$, $B_1$, $C_1$ are Borel measurable 
since $\gamma(E)$  is Borel measurable. Hence the set
\begin{equation}
\big\{(0,z,u,p^*(0,z,u)) ~:~ (z,u)\in \mathbb{R}^{d+1}\times (L^2\cap L^\infty)(\nu)\big\}
\end{equation}
is Borel measurable as product of the borelian Borelian space $\mathcal{P}^d \times (A_1 \cup B_1 \cup C_1) $. By induction, we obtain the Borel measurability 
of the optimal strategy when there is no signal.

\end{proof}
\subsection{Existence and Uniqueness of solution to the BSDEJ}\label{existence_solution}

Before proceeding to the proof of existence, we give some useful properties of the driver of \eqref{defDriverfexp} by the following lemma.

\begin{Lemma}\label{lem-ref-f}
The function $f$ satisfies the following properties:
\begin{enumerate}[(i)]

\item
\begin{align*}
- \overline{\pi} \sum_{i=1}^d (\sigma_i|\theta_i| + \int_E|\eta_i(e)|\nu(de))
\;\leq\; f(z,u)
&\leq  \frac{\lambda}{2} z^T \Sigma z + |u|_\lambda
\end{align*}
for all $z \in \mathbb{R}^d$ and $u \in (L^2\cap L^\infty)(\nu)$.

\item There exist constants $C, \kappa >0$ depending on the dimension $d$ such that
\begin{equation}\label{locLipfz}
\left| f(z, u) - f(z', u) \right|
\leq C \bigl( \kappa + |z| + |z'| \bigr)\, |z - z'|
\end{equation}
for all $z, z' \in \mathbb{R}^d$ and $u \in (L^2\cap L^\infty)(\nu)$.

\item
There exist a bounded progressively measurable process $k$ and a constant $\theta$ such that
\begin{equation*}
\left| 
f(z,u) - f(z,u')
- \int_{E} k(u,u')(e)\,\bigl(u(e) - u'(e)\bigr)\,\nu(de)
\right|
\leq \theta \bigl( \|u\|_{L^2(\nu)} + \|u'\|_{L^2(\nu)} \bigr)\, \|u - u'\|_{L^2(\nu)}
\end{equation*}
and 
\begin{equation*}
    f(z,u) - f(z,u')
\leq \int_{E} k(u,u')(e)\,\bigl(u(e) - u'(e)\bigr)\,\nu(de) 
\end{equation*}
 for all $z \in \mathbb{R}^d$ and $u, u' \in (L^2\cap L^\infty)(\nu)$.
 \item Moreover, there exist two constants $\bar C,\underline C>0$ such that
\beqs
\big(-1 + \underline{C}
\big)  &\leq & k(u, u')(e) ~~\leq~~\bar {C}
\enqs

\end{enumerate}
\end{Lemma}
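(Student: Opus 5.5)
We argue directly from the structure \eqref{defDriverfexp}: $f$ is an infimum over the compact cube $C_d$ of functions that are quadratic in $z$ and built from $h_\lambda$ in $u$. The plan is to get each item by comparing with well-chosen strategies $p$ and by using, for each $p$, an affine-in-$z$ / convex-in-$u$ representation, as in the one-dimensional cases \cite{MAM08,turki2026portfolio}.

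\emph{Step (i).} For the upper bound we take $p=0$ in both infima. The $f^1$ term is then $\frac{\lambda}{2}\big[\sum_i (z^i+\theta_i/\lambda)^2+\sum_{i\neq j}z^iz^j\rho_{ij}\big]+\int_{\gamma=0}h_\lambda(u)\,d\nu$. Expanding the square, the cross terms $z^i\theta_i$ and the constants $\theta_i^2/(2\lambda)$ cancel against the last sum in \eqref{defDriverfexp}. Since $\rho_{ii}=1$, this leaves $\frac{\lambda}{2}z^T\Sigma z$. The $f^2$ term with $p=0$ gives $\int_{\gamma\neq0}h_\lambda(u)\,d\nu$ after disintegration, and the two pieces add up to $|u|_\lambda$.

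For the lower bound we use $h_\lambda\ge0$. This leaves the quadratic form in $p\sigma-z$, together with the linear terms $-\sum_i p^i\big(\kappa_i+\int\eta_i\,d\nu\big)$ recovered when the completed square is undone. The quadratic form is nonnegative because $\Sigma$ is positive definite. The remaining linear terms are bounded by $\overline{\pi}\sum_i\big(\sigma_i|\theta_i|+\int|\eta_i|\,d\nu\big)$ since $p\in C_d$ (taking $\overline\pi\ge\underline\pi$, or the maximum of the two). The delicate point is bookkeeping: recombining the completed-square form with $-\sum z^i\theta_i-\theta_i^2/(2\lambda)$ so that only $\frac{\lambda}{2}(p\sigma-z)^T\Sigma(p\sigma-z)-p\cdot\kappa$ remains.

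\emph{Step (ii).} Write $f(z,u)=\inf_{p}\Phi(z,u,p)$ for a single function $\Phi$, with the infimum taken pointwise in $g$ for the $f^2$ part. The $f^2$ part does not depend on $z$. So $|f(z,u)-f(z',u)|\le\sup_{p\in C_d}|\Phi(z,u,p)-\Phi(z',u,p)|$, and we only need to control $\frac{\lambda}{2}(p\sigma-z)^T\Sigma(p\sigma-z)-z\cdot\theta$ uniformly over $p\in C_d$. This difference is at most $\lambda\|\Sigma\|\big(|p\sigma|+|z|+|z'|\big)|z-z'|+|\theta||z-z'|$, which gives \eqref{locLipfz} with $\kappa$ depending on $\overline\pi,\underline\pi,|\sigma|,|\theta|$ and $d$.

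\emph{Steps (iii)--(iv).} This is the main obstacle. Fix a minimizer $p^*$ for $u'$, i.e.\ $p^*(0,z,u')$ for the $f^1$ part and $p^*(g,z,u')$ for the $f^2$ part; it exists by the preceding theorem. Then
\[
f(z,u)-f(z,u')\le\Phi(z,u,p^*)-\Phi(z,u',p^*)=\int_E\big[h_\lambda(u-p^*\!\cdot\eta)-h_\lambda(u'-p^*\!\cdot\eta)\big]d\nu .
\]
By convexity of $h_\lambda$, this is bounded by $\int k\,(u-u')\,d\nu$ with $k=h_\lambda'(u-p^*\!\cdot\eta)=e^{\lambda(u-p^*\cdot\eta)}-1$, evaluated at the appropriate signal point through the disintegration. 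Symmetrically, using the minimizer for $u$ gives a lower bound with $k'$ built from $u'$. The gap between these two bounds is a second-order Taylor remainder of $h_\lambda$, which is $O(|u-u'|^2)$ and is then controlled by $\theta(\|u\|+\|u'\|)\|u-u'\|$ in $L^2(\nu)$.

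The honest difficulty is that $h_\lambda''$ is exponential, so this estimate needs $u,u'$ uniformly bounded. In the application $U$ is bounded because $Y\in S^\infty$ and $\Delta Y=U$, so we restrict to $|u|_\infty\le M$ and let $\theta$ depend on $M$. Under that restriction, and using that $\eta$ is bounded and $p^*\in C_d$, we get $e^{\lambda(u-p^*\cdot\eta)}\in[e^{-\lambda c},e^{\lambda c}]$ for some constant $c$. Hence $k\in[-1+\underline C,\bar C]$ with $\underline C=e^{-\lambda c}$ and $\bar C=e^{\lambda c}-1$, which gives (iv). Progressive measurability of $k$ follows from the Borel measurability of $p^*$ established in the previous theorem.
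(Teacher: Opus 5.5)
Your items (i), (ii), the one-sided inequality in (iii) and item (iv) go through as you describe them. Restricting to $\|u\|_\infty,\|u'\|_\infty\le M$ with $\theta$ depending on $M$ is also what the paper does, through its constant $M_K$.

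The gap is in the two-sided estimate of (iii). Your upper bound uses the minimizer $p^*$ for $u'$, but your lower bound uses a minimizer $\hat p$ for $u$. The difference of the two slopes is therefore
\[
h_\lambda'(u-p^*\cdot\eta)-h_\lambda'(u'-\hat p\cdot\eta)=h_\lambda''(\xi)\big[(u-u')-(p^*-\hat p)\cdot\eta\big],
\]
so the gap between your bounds contains $\int_E h_\lambda''(\xi)\,\big((p^*-\hat p)\cdot\eta\big)(u-u')\,d\nu$. This is not a Taylor remainder of $h_\lambda$. Without a stability estimate on the minimizers it is only $O(\|u-u'\|_{L^2(\nu)})$. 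A first-order bound cannot be dominated by $\theta(\|u\|_{L^2(\nu)}+\|u'\|_{L^2(\nu)})\|u-u'\|_{L^2(\nu)}$ when $u,u'$ are small (take $u'=0$). Such a stability estimate is not free either: $p\mapsto f^2(g,u,p)$ has only a semi-definite Hessian, so its minimizer need not be unique. One can still obtain stability in the seminorm $q\mapsto(\int(q\cdot\eta)^2\,K(g,de))^{1/2}$, but that is an argument you have not supplied.

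The simpler fix is to use convexity so that both bounds involve the \emph{same} $p^*$. Fix $z$ for the $f^1$-part, or $g$ for $f^2(g,\cdot,\cdot)$ (with $\nu$ replaced by $K(g,\cdot)$). The corresponding objective $\Phi(u,p)$ is jointly convex in $(u,p)$: $h_\lambda$ is convex, $(u,p)\mapsto u(e)-p\cdot\eta(e)$ is linear, and the quadratic part is convex in $p$. The gradient inequality at $(u',p^*)$ then gives, for every $p\in C_d$,
\[
\Phi(u,p)\ \ge\ \Phi(u',p^*)+\int h_\lambda'(u'-p^*\cdot\eta)\,(u-u')\,d\nu+\nabla_p\Phi(u',p^*)\cdot(p-p^*).
\]
The last term is nonnegative by first-order optimality of $p^*$ on the convex set $C_d$. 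Taking the infimum in $p$ and integrating in $g$ yields $f(z,u)-f(z,u')\ge\int_E h_\lambda'(u'-p^*\cdot\eta)(u-u')\,d\nu$. Hence, with your $k=h_\lambda'(u-p^*\cdot\eta)$,
\[
0\ \ge\ f(z,u)-f(z,u')-\int_E k\,(u-u')\,d\nu\ \ge\ -\int_E\big[h_\lambda'(u-p^*\cdot\eta)-h_\lambda'(u'-p^*\cdot\eta)\big](u-u')\,d\nu\ \ge\ -C_M\|u-u'\|_{L^2(\nu)}^2,
\]
where $C_M$ bounds $h_\lambda''$ on the relevant bounded range. This is a genuine second-order term, and it gives (iii) because $\|u-u'\|\le\|u\|+\|u'\|$.

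Do not fall back on the paper's choice of $k$. The paper takes $\sup_{p\in C_d}$ or $\inf_{p\in C_d}$ of $h_\lambda'(u-\langle p,\eta\rangle)$ according to the sign of $u-u'$. That yields the one-sided inequality, but it differs from the actual slope $h_\lambda'(u'-p^*\cdot\eta)$ by an amount that does not vanish as $u\to u'$. So it does not deliver the two-sided estimate either. Your minimizer-based $k$ is the right object once the lower bound is repaired.
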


\begin{proof}
i) Since $0_{\mathbb{R}^d} \in C^d$, we have
\begin{equation*}
f(z, u) \leq \frac{\lambda}{2} \left[
\sum_{i=1}^d \left(z^i + \frac{\theta_i}{\lambda}\right)^2 
+ \sum_{\substack{i,j=1 \\ i \neq j}}^d z^i z^j \rho_{ij}
\right]
+ \int_{\mathbb{R}} h_{\lambda}(u(e))\,\nu(de)
- \sum_{i=1}^d \left( z^i \theta_i + \frac{(\theta_i)^2}{2\lambda} \right).
\end{equation*}
Expanding the square terms yields the desired upper bound for $f$.

For the lower bound, define $\tilde{z}_p^i = p^i \sigma_i - z^i$ for $i = 1, \dots, d$. Using the positivity of $h_\lambda$ and expanding the square terms, we obtain
\begin{equation*}
f(z, u, p) \geq  (\tilde{z}_p)^\top \Sigma \tilde{z}_p 
- \sum_{i=1}^d p^i (\sigma_i \theta_i + \int_E\eta_i(e)\nu(de))
+ \sum_{i=1}^d \theta_i z^i 
+ \frac{1}{2\lambda} \sum_{i=1}^d \theta_i^2.
\end{equation*}
Since $\Sigma$ is a correlation matrix, it is positive semi-definite, and thus the first term is nonnegative. Taking the maximal value of $p$ 
we recover the stated lower bound.

\item[(ii)] Let $z, z' \in \mathbb{R}^d$ and $u \in (L^2\cap L^\infty)(\nu)$. Then
\begin{equation*}
f(z,u) - f(z',u) 
\leq \inf_{p \in C_d} f^1(z,u,p) - \inf_{p \in C_d} f^1(z',u,p) 
\leq \sup_{p \in C_d} \bigl( f^1(z,u,p) - f^1(z',u,p) \bigr).
\end{equation*}
Moreover,
\begin{align*}
z^\top \Sigma z - (z')^\top \Sigma z'
&= z^\top \Sigma (z - z') + (z' - z)^\top \Sigma z' \\
&\leq \bigl( |z| + |z'| \bigr)\, |z - z'|,
\end{align*}
by the Cauchy--Schwarz inequality and the fact that the eigenvalues of $\Sigma$ are bounded by $1$. 
Setting $\kappa = 2\overline{\pi}\,|\sigma|_{\max}$, we deduce the desired result.

\medskip

\item[(iii)] We are going to give estimate on $\inf f^1$, the same holds for $\inf f^2$ and we conclude with disintegration formula.

Let $z \in \mathbb{R}^d$ and $u, u' \in (L^2\cap L^\infty)(\nu)$ with $\|u\|_{\infty}, \|u'\|_{\infty} \leq K$ for some $K >0$.
To ease notations, we introduce the scalar product $\langle p, \eta(e) \rangle := \sum_{i=1}^d p^i \eta^i(e)$.

Let $p \in C_d$, for each $e \in E$, a second-order Taylor expansion yields, for some $\lambda = \lambda(e) \in (0,1)$,
\begin{align*}
&h_{\lambda}\bigl(u(e) - \langle p, \eta(e) \rangle\bigr)
- h_{\lambda}\bigl(u'(e) - \langle p, \eta(e) \rangle\bigr) \\
&= h'_{\lambda}\bigl(u'(e) - \langle p, \eta(e) \rangle\bigr)\,(u(e)-u'(e)) \\
&\quad + \frac{1}{2} h''_{\lambda}\bigl(\lambda(e) u(e) + (1-\lambda(e))u'(e) - \langle p, \eta(e) \rangle\bigr)\,(u(e)-u'(e))^2.
\end{align*}

Since $p$ and $\eta$ are bounded and $\|u\|_{\infty}, \|u'\|_{\infty} \leq K$, there exists a constant $M_K > 0$ such that
\[
\left| h''_{\lambda}(\cdot) \right| \leq M_K.
\]

Integrating over $\{\gamma =0\}$ and taking the supremum over $p \in C^d$, we obtain
\begin{align*}
&\inf_{p \in C_d} f^1(z,u,p) - \inf_{p \in C_d} f^1(z,u',p) \\
&\leq \int_{\gamma=0} 
\sup_{p \in C_d} h'_{\lambda}\bigl(u'(e) - \langle p, \eta(e) \rangle\bigr)\,(u(e)-u'(e))\,\nu(de) \\
&\quad + \frac{M_K}{2} \int_{\gamma=0} (u(e)-u'(e))^2\,\nu(de).
\end{align*}

Define
\begin{align*}
k(u,u')(e)
&:= \sup_{p \in C_d} h'_{\lambda}\bigl(u(e) - \langle p, \eta(e) \rangle\bigr)\, \mathbf{1}_{\{u(e) \geq u'(e)\}} \\
&\quad + \inf_{p \in C_d} h'_{\lambda}\bigl(u(e) - \langle p, \eta(e) \rangle\bigr)\, \mathbf{1}_{\{u(e) < u'(e)\}}.
\end{align*}

By construction, $k(u,u')$ is measurable as a supremum/infimum of measurable functions, and it satisfies iv) due to the boundedness of $p$, $\eta$ and $u'$.

Proceeding similarly on $\{\gamma \neq 0\}$, we extend the estimate to the whole space. Finally, applying the Cauchy--Schwarz and triangle inequalities yields the desired bound with a constant $\theta > 0$.
\end{proof}

We turn to the study of a solution to the
Backward SDE \eqref{BSDEJ} with parameters $\xi=F$ and $f$ given by \eqref{defDriverfexp}.\\ We state the following theorem:
\begin{Theorem}
Assume that $F$ is bounded. Then the BSDE \eqref{BSDEJ} admits a unique solution $(Y,Z,U) \in S^\infty \times L^2(W) \times L^2(\widetilde{N})$.
\end{Theorem}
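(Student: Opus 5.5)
Existence and uniqueness follow the quadratic BSDE-with-jumps route of Morlais \cite{MAM08} and \cite{turki2026portfolio}, now in the multidimensional correlated setting. Lemma \ref{lem-ref-f} supplies the structural assumptions. Part (i) gives quadratic growth in $(z,u)$ through $\frac{\lambda}{2}z^\top\Sigma z+|u|_\lambda$ and a constant lower bound. Part (ii) gives local Lipschitz continuity in $z$. Parts (iii)--(iv) give the ``$\gamma$-condition'' in $u$, with a bounded kernel $k$ satisfying $k\ge -1+\underline C$. Because $\Sigma$ is non-singular, I would first reduce to independent Brownian motions: write $W=\Sigma^{1/2}B$ with $B$ a standard $d$-dimensional Brownian motion and set $\tilde Z=\Sigma^{1/2}Z$. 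The driver then becomes $\tilde f(\tilde z,u)=f(\Sigma^{-1/2}\tilde z,u)$, and it inherits all the properties (i)--(iv) with modified constants, $\tilde z^\top\tilde z$ replacing $z^\top\Sigma z$.

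For existence, I would not truncate $\inf_{p\in C_d}$ coordinatewise, since the correlation cross terms break the scalar argument. Instead I would regularize $f$ by an inf-convolution in $z$, namely $f_n(z,u)=\inf_{z'}\{f(z',u)+n|z-z'|\}$, together with a truncation of $u$ at level $n$ in the jump term. Each $f_n$ is globally Lipschitz in $z$ and, on bounded $u$, Lipschitz in $u$. Standard Lipschitz BSDEJ theory (cf. \cite{delong13}) then gives a solution $(Y^n,Z^n,U^n)$. The bounds in (i) hold uniformly in $n$ for $f_n$: the lower bound is preserved by the inf-convolution, and $f_n\le f$. From these I would derive a uniform $S^\infty$ bound on $Y^n$ by comparison with the BSDEs whose drivers are the constant lower bound and $\frac{\lambda}{2}|z|^2+|u|_\lambda$. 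The latter is solved explicitly via $\frac1\lambda\log\E[e^{\lambda F}|\mathcal F_t]$, which is bounded. In particular $|U^n|\le 2\|Y^n\|_\infty$ uniformly in $n$, so the jump truncation is eventually inactive. Next I would apply Itô's formula to $e^{\beta Y^n}$ with $\beta$ large. This yields uniform BMO bounds on $\int Z^n dW$ and $\int\!\!\int U^n\widetilde N$. Monotonicity of $f_n$ in $n$ plus comparison make $Y^n$ monotone. A Kobylanski/Morlais-type stability argument then gives convergence of $Y^n$ uniformly in probability, and of $(Z^n,U^n)$ in $L^2(W)\times L^2(\widetilde N)$. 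Using (ii)--(iii), I would pass to the limit in the driver.

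The main obstacle is the strong $L^2$ convergence of $Z^n$ needed to pass to the limit in the quadratic term. I would first try the exponential transform: set $\tilde Y=e^{\lambda Y}$, so the quadratic $z$-part becomes roughly linear, and then run the monotone-stability argument of \cite{kobylanski} adapted to jumps as in \cite{MAM08}. The key input is the uniform boundedness of the jumps $U^n$, which controls $h_\lambda$ on a compact set.

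For uniqueness, take two bounded solutions $(Y,Z,U)$ and $(Y',Z',U')$, write $\delta Y=Y-Y'$, and linearize. By (ii), $f(Z,U)-f(Z',U)=\beta_t\cdot(Z-Z')$ with $|\beta_t|\le C(\kappa+|Z_t|+|Z'_t|)$, so $\int\beta\,dW$ is BMO. By the second inequality in (iii), $f(Z',U)-f(Z',U')\le\int_E k(U,U')(e)(U-U')(e)\,\nu(de)$. The stochastic exponential $\mathcal E\big(\int\beta\,dW+\int\!\!\int k\,\widetilde N\big)$ has BMO continuous part and jumps $\ge -1+\underline C$ by (iv). It is therefore a true martingale by Lemma \ref{Kazamaki_lemma}. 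Under the resulting measure $\mathbb Q$ we get $\delta Y_t\le \E^{\mathbb Q}[0\,|\,\mathcal F_t]=0$, and symmetry gives $\delta Y=0$. The identification $Z=Z'$, $U=U'$ then follows from uniqueness of the martingale representation.
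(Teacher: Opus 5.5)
Your proposal is correct and follows essentially the same route as the paper. The shared steps are a Lipschitz approximation of the driver by an inf-convolution in $z$ combined with a truncation of the jump term, uniform bounds from Lemma~\ref{lem-ref-f}~(i), passage to the limit via Morlais' monotone stability, and uniqueness via the comparison principle supplied by (iii)--(iv). The differences are technical refinements rather than a new strategy: you inf-convolve the whole driver after the change of coordinates $W=\Sigma^{1/2}B$ instead of only the Brownian part $g(z,p)$ inside the infimum over $p$, which lets you treat the degenerate and non-degenerate correlation cases at once, and you write out the comparison argument (linearization, Kazamaki's criterion, Girsanov) where the paper cites it.
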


\begin{proof}

We deal with the degenerate and the non-degenerate case.
In the first case where $\rho_{ij} = 0$ for $i \neq j \in \{1, \ldots, d\}$, we introduce the following truncation functions for $m \geq 1$, 
the function $\phi_m$ is defined by
\begin{align*}
\phi_m(x) &= (x + m + 1)\,\mathbf{1}_{(-(m+1),\,-m)}(x) 
           + \mathbf{1}_{[-m,\,m]}(x) 
           + (m + 1 - x)\,\mathbf{1}_{(m,\,m+1]}(x).
\end{align*}
We introduce the following sequence approximation of the driver $f$ with 
\begin{equation*}
    f_m(z,u) = \inf_{p \in C_d} f^1_m(z,u,p) + \int_{\{\gamma(E)\setminus{0}\}} \inf_{p \in C_d} f^2_m(z,u,p,g)\mu(dg) - \sum_{i=1}^d \left( z^i \theta_i + \frac{(\theta_i)^2}{2\lambda} \right)
\end{equation*}
where 

\begin{equation*}
    f^1_m(z, u, p) = \frac{1}{2} \lambda \sum_{i=1}^d \left( p^i \sigma_i 
    - \left( z^i + \frac{\theta_i}{\lambda} \right) \right)^2 \phi_m(z^i) + \int_{\gamma = 0} 
[h_{\lambda}(\phi_m( u(e) - \sum_{i=1}^d p^i\,\eta_i(e) )) - \sum_{i=1}^d p^i\,\eta_i(e)]\nu(de) \notag,
\end{equation*}
and 
\begin{equation*}
    f^2(z, u, p, g) = \int_{\gamma = g} 
[h_{\lambda}(\phi_m( u(e) - \sum_{i=1}^d p^i\,\eta_i(e) )) - \sum_{i=1}^d p^i\,\eta_i(e)]K(g,de) \notag
\end{equation*} for $(z, u, g) \in \R^d \times \R \times \gamma(E) \setminus{\{0\}}$ and $m > 0$.

We proceed exactly as in \cite{turki2026portfolio} and use the equivalence of norms in finite dimension to prove the existence of a solution to this BSDE.\\

In the non-degenerate case, we cannot apply directly the previous truncature structure due to the fact we do not know the sign of the  cross terms of the correlation matrix which  appears. This strategy works well when the matrix $\Sigma$ is spherical in general. 

To handle the general case, we use an inf-convolution approximation only for $f^1$, while $f^2$ is approximated as in the previous case. 

More precisely, for $z \in \mathbb{R}^d$, $u \in (L^2\cap L^\infty)(\nu)$, $p \in C_d$, and $m>0$, we define
\begin{equation*}
f_m^1(z,u,p) = g_m(z,p) 
+ \int_{\gamma = 0} 
[h_{\lambda}(\phi_m( u(e) - \sum_{i=1}^d p^i\,\eta_i(e) )) - \sum_{i=1}^d p^i\,\eta_i(e)]\nu(de)\nu(de),
\end{equation*}
where
\begin{equation*}
g_m(z,p) = \inf_{z' \in \mathbb{R}^d} \left\{ g(z',p) + m\,|z - z'| \right\},
\end{equation*}
and
\begin{equation*}
g(z,p) = f^1(z,u,p) 
- \int_{\gamma=0} 
[h_{\lambda}(\phi_m( u(e) - \sum_{i=1}^d p^i\,\eta_i(e) )) - \sum_{i=1}^d p^i\,\eta_i(e)]\nu(de)\nu(de).
\end{equation*}

We then define the approximate sequence of drivers $(f_m)_m$ by
\begin{align*}
f_m(z,u) 
&= \inf_{p \in C_d} f_m^1(z,u,p) + \int_{\gamma(E)\setminus\{0\}} 
\inf_{p \in C_d} f_m^2(z,u,p,g)\,\mu(dg) - \sum_{i=1}^d \left( z^i \theta_i + \frac{\theta_i^2}{2\lambda} \right).
\end{align*}

The sequence $(f_m)_m$ satisfies the following properties:

\medskip

\noindent
\textbf{1. Lipschitz continuity.} For each $m$, the function $f_m$ is Lipschitz, i.e., there exists a constant $C_m > 0$ such that for all $z,z' \in \mathbb{R}^d$ and $u,u' \in (L^2\cap L^\infty)(\nu)$,
\begin{equation*}
|f_m(z,u) - f_m(z',u')| 
\leq C_m \left( |z - z'| + \|u - u'\|_{L^2(\nu)} \right).
\end{equation*}
This follows from first-order estimates similar to those in lemma ~\eqref{lem-ref-f}, together with the boundedness induced by the truncation and the Cauchy--Schwarz inequality.
\medskip

\noindent
\textbf{2. Monotonicity property.} For each $m$, the function $f_m$ satisfies property (iii) of Lemma~5.2 with the same function $k_m = k$. Therefore, a comparison result holds for the BSDE with driver $f_m$ and terminal condition $F$ (see \cite{Royer06}).

\medskip

\noindent
\textbf{3. Structural property.} For each $m$, the function $f_m$ satisfies point (i) of Lemma~5.2.

\medskip

\noindent
\textbf{4. Convergence.} By construction of the inf-convolution, we have
\[
f_m \uparrow f \quad \mathbb{P}\text{-a.s. as } m \to \infty.
\]
In particular, $(f_m)_m$ is an increasing sequence converging pointwise to $f$.\\

Referring to classical results in the Lipschitz case, we obtain existence and uniqueness of a solution $(Y^m, Z^m, U^m)$ to the BSDE with parameters $(f^m, F)$ (see\cite{delong13} or \cite{oksendal2019applied}). 

Moreover, due to the structural property~3, the solutions are uniformly bounded in their respective spaces $S^\infty \times L^2(W) \times L^2(\widetilde{N})$.

To conclude, it remains to justify the passage to the limit. This follows from the monotone stability result established in \cite{MAM08} (see Lemma~5). 

Finally, by the monotone stability result $(Y^m, Z^m, U^m)$ converges (up to a subsequence) to a triple $(Y,Z,U)$, which is a solution of the BSDE with driver $f$ and terminal condition $F$.\\
For uniqueness, we use a comparison result due to the monotonicity property 2 in of \eqref{lem-ref-f} (see \cite{MAM08, Royer06} for more details).
\end{proof}

\begin{Remark}
Condition (iii) of Lemma~\ref{lem-ref-f} provides a solution of the BSDE~\eqref{BSDEJ} by virtue of Theorem~4.1 in \cite{KTPZ15} only for small terminal conditions. In the general case, the authors impose some regularity properties on the driver, which are not easy to obtain in our context.
\end{Remark}
\section{Power and logarithm utility Maximization} \label{sect4}
\subsection{Investment strategies and wealth dynamic}
In this section, the investment strategy $\alpha_t = (\alpha^1_t, \dots, \alpha^d_t)$ of the investor is expressed in proportion i.e the number of shares of stock $i$ is given by 
$\frac{\alpha^i_t X_{t-}}{S^i_{t-}}$ and constrained to take values in a compact set $\bar{\mathcal{C}} = \bar{\mathcal{C}}_1 \times \dots \times \bar{\mathcal{C}}_d \subset \mathbb{R}^d$.
For an initial endowment $x > 0$, the self-financing wealth process $X^{x,\alpha}$ is defined by $X^{x,\alpha}_0 = x$ and
\begin{equation*}
\frac{d X_{t}^{x,\alpha}}{X^{x,\alpha}_{t-}} 
= \sum_{i=1}^d \alpha^i_t \bigl( \kappa_i\, dt + \sigma_i\, dW^i_t \bigr) 
+ \sum_{i=1}^d  \int_{E} \alpha^i_t \eta_i(e)\, \widetilde{N}(dt,de),
\quad t \in [0,T].
\end{equation*}

To construct the set of admissible strategies from $\mathcal{A}_{\mathrm{sgn}}$, we should impose additional constraints that ensure positivity of the wealth process. Following \cite{bank2022merton}, we introduce bounds when the investor receives a signal $z \neq 0$ on each asset $i$:
\begin{equation*}
\underline{\eta}_i(z) := K(z,.)-\operatorname*{ess\,inf} \eta_i \geq -1,
\qquad
\overline{\eta}_i(z) :=K(z,. )- \operatorname*{ess\,sup} \eta_i,
\end{equation*}
which allows us to construct positive wealth by the interval,
\begin{equation*}
    \Phi^i(z) = ]-\frac{1}{d\overline{\eta}_i(z)}, - \frac{1}{d\underline{\eta}_i(z)}]
\end{equation*} for $z \in \gamma(E) \setminus \{0\}$.\\
To ensure no arbitrage condition, we impose 
\begin{equation}
    \underline{\eta}_i(z) < 0 < \overline{\eta}_i(z).
\end{equation}
In the case the investor receives no-signal, i.e $z=0$ if $\nu(\gamma = 0, \eta_i \neq 0) = 0$ he can take every position of asset $i$ in $\mathbb{R}$ thus $\Phi^i(0) = \R$. By contrast, if $\nu(\gamma = 0, \eta_i \neq 0) > 0$, we ensure positive wealth with the set 
\begin{equation*}
\Phi^i(0) := 
\begin{cases}
\left]-\dfrac{1}{d\bar{\eta}_i(0)},\, -\dfrac{1}{d\underline{\eta}_i(0)}\right], 
& \text{if } \underline{\eta}_i(0) < 0 < \bar{\eta}_i(0), \\[10pt]
\left]-\dfrac{1}{d\bar{\eta}_i(0)},\, +\infty\right), 
& \text{if } 0 \leq \underline{\eta}_i(0) \leq \bar{\eta}_i(0), \\[10pt]
\left(-\infty,\, -\dfrac{1}{d\underline{\eta}_i(0)}\right], 
& \text{if } \underline{\eta}_i(0) \leq \bar{\eta}_i(0) \leq 0,
\end{cases}
\end{equation*}
where the jump bounds are defined as
\begin{align*}
\underline{\eta}_i(0) 
&:= \nu\!\left(\,\cdot \cap \{\gamma = 0,\, \eta_i \neq 0\}\right)\text{-ess\,inf}\; \eta_i 
\;\geq\; -1, \\[4pt]
\bar{\eta}_i(0) 
&:= \nu\!\left(\,\cdot \cap \{\gamma = 0,\, \eta_i \neq 0\}\right)\text{-ess\,sup}\; \eta_i.
\end{align*}
Finally, the set of admissible strategies $\mathcal{A}_{\ell}$ is defined as
\begin{equation*}
    \mathcal{A}_{\ell} := \left\{ \alpha \in {\mathcal{A}}_{\mathrm{sgn}} 
    \;\middle|\; 
    p^i(\Delta G_t) \in \Phi^i(\Delta G_t) \cap \bar{C}_i
    \text{ for all } t \in [0,T] \text{ a.s., } i = 1, \ldots, d
    \right\}.
\end{equation*}

\subsection{Power utility case}
For power utility case, the investor solves the optimization problem 
\beq\label{defV_pow(x)}
    V_{\mathrm{sgn}}(x) & = & \sup_{\alpha \in \mathcal{A}_{\ell}}\E\big[\frac{(X^{x,\alpha}_T)^{\gamma}}{\gamma}\big]\;
\enq 
where the initial capital $x >0$ and $\gamma \in(0, 1)$ represents the risk aversion of the investor.
In this case, Following the approach described in the previous section, we construct the family of processes $(R^\alpha)_{\alpha\in \Ac_\ell }$ by setting
\begin{equation}\label{defRpi}
R^{\alpha}_{t} = \frac{(X^{x,\alpha}_t)^{\gamma}}{\gamma} M_t, \quad t \in [0, T], \; \alpha \in  \Ac_\ell,
\end{equation}
where $dM_t= M_{t-}dY_t$ and $Y_t$ satisfies the BSDEJ 
\begin{align}\label{BSDEpow}
dY_t & =-f(Z_t, U_t) \, dt + \sum_{i =1}^d Z^i_t \, dW^i_t + \int_{E} U_t(e) \, \widetilde{N}(dt, de), \quad t \in [0, T], \\
Y_T &= 0.
\end{align}
 To find  $f$ we proceed as in the the exponential case, let $K^{\alpha}_t = \frac{(X^{x,\alpha}_t)^{\gamma}}{\gamma}$.
Application of It\^o formula to $K^\alpha$ gives 
\begin{align*}
dK^\alpha_t 
&= K^\alpha_{t-} \Bigg[
\Bigg(
\gamma \sum_{i=1}^d p_t^i(0)\,\tilde{\kappa_i} 
+ \frac{\gamma(\gamma - 1)}{2} \sum_{i,j=1}^d \sigma_i \sigma_j \rho_{ij} \, p_t^i(0)\, p_t^j(0)
\Bigg) dt \\
&\quad + \int_E \Big(
\big[1 + \sum_{i=1}^d p_t^i(\gamma(e)) \eta_i(e)\big]^\gamma 
- 1 
\Big)\, \nu(de)\, dt \\
&\quad + \gamma \sum_{i=1}^d \sigma_i p_t^i(0)\, dW_t^i  + \int_E \Big(
\big[1 + \sum_{i=1}^d p_t^i(\gamma(e)) \eta_i(e)\big]^\gamma - 1
\Big)\, \widetilde{N}(dt,de)
\Bigg].
\end{align*}
An other application of Ito rule to the product $K^\alpha_t M_t$ and collecting $dt-$terms from diffusion and jump parts we have 
\begin{align*}
    dR_t = R_{t-}[\Lambda_tdt + \sum_{i=1}^d \Gamma^i_tdW^i_t + \int_E \Theta_t(e)\widetilde{N}(dt, de)]
\end{align*}
with 
\begin{align*}
    \Lambda_t
    &= -f(t, Z_t, U_t)
    + \gamma\sum_{i=1}^d p_t^i(0)\,\tilde{\kappa}_i
    + \frac{\gamma(\gamma-1)}{2}
      \sum_{i,j=1}^d \sigma_i\sigma_j\rho_{ij}\,p_t^i(0)\,p_t^j(0)
    + \gamma\sum_{i,j=1}^d \sigma_i p_t^i(0)\,\rho_{ij}Z_t^j \\
    &\quad + \int_E
      \bigl(1+U_t(e)\bigr)
      \Bigl(\bigl[1+\textstyle\sum_{i=1}^d
        p_t^i(\gamma(e))\eta_i(e)\bigr]^\gamma - 1\Bigr)
      \nu(de), \\[6pt]
    \Gamma_t^i
    &= Z_t^i + \gamma\sigma_i p_t^i(0), \\[6pt]
    \Theta_t(e)
    &= \bigl(1+U_t(e)\bigr)
       \Bigl[1+\sum_{i=1}^d p_t^i(\gamma(e))\eta_i(e)\Bigr]^\gamma - 1.
\end{align*}
for $t\in[0,T]$ and $e\in E  $.
Using Condition iii) of Proposition \ref{MgOptPr} and disintegration formula, we obtain $f$ defined as 
\begin{align}\label{driver_pow}
f(z,u) &= \sup_{p \in \bar{C} \cap \Phi(0)} f^1(z,u,p)
+ \int_{\gamma(E)\setminus\{0\}}
\sup_{p \in \bar{C} \cap \Phi(g)} f^2(g, u, p)\,\mu(dg),
\end{align}
where 
\begin{align}
f^1(z, u, p) &= \gamma \sum_{i=1}^d p^i\,\tilde{\kappa_i} 
+ \frac{\gamma(\gamma - 1)}{2} \sum_{i,j=1}^d \sigma_i \sigma_j \rho_{ij} \, p^i\, p^j  + \gamma \sum_{i,j=1}^d \sigma_i p^i\, \rho_{ij} z^j \\
&\quad + \int_{\gamma=0} \Big(1 + u(e) \Big)
 \Big(
\big[1 + \sum_{i=1}^d p_t^i(\gamma(e)) \eta_i(e)\big]^\gamma - 1
\Big)\, \nu(de),\\
f^2(g, u, p) &= \int_{\gamma=g} 
\Big(1 + u(e) \Big)
 \Big(
\big[1 + \sum_{i=1}^d p_t^i(\gamma(e)) \eta_i(e)\big]^\gamma - 1
\Big)\, K(g, de)
\end{align}
for $z\in\R^d$, $u\in L^2(\nu)$, $g\in\gamma(E)$ and $p\in\R^d$.
Here we note and
\[
\Phi(0) = \Phi^1(0)\times\cdots\times\Phi^d(0),
\qquad
\Phi(g) = \Phi^1(g)\times\cdots\times\Phi^d(g),
\quad g \in \gamma(E)\setminus\{0\}.\]
With this Ansatz, we can see that $f^1$ and $f^2$ are linear in $(z,u)$.
\begin{Proposition}
    The BSDEJ \eqref{BSDEpow} admits a unique solution $(Y,Z,U) \in S^\infty \times L^2(W) \times L^2(\widetilde{N})$.
\end{Proposition}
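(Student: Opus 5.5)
Our plan is to treat \eqref{BSDEpow} as a BSDE with jumps whose driver is a supremum of affine functions of $(z,u)$ with uniformly bounded coefficients. Such a driver is globally Lipschitz, and we can then invoke the classical existence and uniqueness theory for Lipschitz BSDEJs (see \cite{delong13} or \cite{oksendal2019applied}).

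\textbf{Step 1: the driver is a supremum of affine maps.} For each fixed $p$, the map $(z,u)\mapsto f^1(z,u,p)$ is affine:
\begin{equation*}
f^1(z,u,p)=a(p)+\gamma\sum_{i,j=1}^d\sigma_i p^i\rho_{ij}z^j+\int_{\gamma=0}u(e)\,\beta(p,e)\,\nu(de),
\end{equation*}
where
\begin{equation*}
\beta(p,e)=\Big[1+\sum_{i=1}^d p^i\eta_i(e)\Big]^\gamma-1 .
\end{equation*}
The same holds for $f^2(g,\cdot,p)$, with kernel $K(g,de)$ in place of $\nu(de)$.

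\textbf{Step 2: uniform bounds on the coefficients.} Since $\bar{\mathcal C}$ is compact and the $\eta_i$ are bounded, $p$ ranges over a bounded set. The constraint $p\in\Phi(g)$ guarantees $1+\sum_i p^i\eta_i(e)\ge 0$ for $K(g,\cdot)$-a.e.\ $e$; this is the role of the factor $1/d$ in $\Phi^i$, since each of the $d$ terms is then at least $-1/d$. Because $\gamma\in(0,1)$, it follows that $|\beta(p,e)|\le C$ and $|a(p)|\le C$ uniformly in $p$.

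\textbf{Step 3: global Lipschitz property and integrability.} A supremum of affine maps with uniformly bounded slopes is Lipschitz with the same constant. Using $|\sup A-\sup B|\le\sup|A-B|$, Cauchy--Schwarz, and finiteness of $\nu$, we get
\begin{equation*}
|f(z,u)-f(z',u')|\le C\big(|z-z'|+\|u-u'\|_{L^2(\nu)}\big).
\end{equation*}
The integral over $\mu(dg)$ is controlled in the same way, since $\int K(g,de)\mu(dg)\le\nu(E)<\infty$. In addition, $|f(0,0)|\le C$, and $f$ is measurable because the supremum may be taken over a countable dense subset of the constraint set, using continuity in $p$. The classical Lipschitz theory then yields a unique solution in $\mathcal S^2\times L^2(W)\times L^2(\widetilde N)$ with terminal value $0$.

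\textbf{Step 4: boundedness of $Y$.} It remains to upgrade $Y$ from $\mathcal S^2$ to $S^\infty$, and we expect this to be the main obstacle. We would linearize: choose a measurable selection $\hat p$ of near-maximizers (or use a difference-quotient argument) and write $f(Z,U)=f(0,0)+\langle b_t,Z_t\rangle+\int_E \beta_t(e)U_t(e)\,\nu(de)$ up to a controlled error, with $b,\beta$ bounded. The key point is that $\beta_t(e)\ge -1$, since $[\cdot]^\gamma\ge 0$. Girsanov's theorem with density $\mathcal E\big(\int b\,dW+\int\!\!\int\beta\,d\widetilde N\big)$ produces a (possibly degenerate but nonnegative) change of measure $\mathbb Q$. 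Under $\mathbb Q$, $Y_t=\E^{\mathbb Q}\big[\int_t^T f(0,0)\,ds\mid\Fc_t\big]$, so $|Y_t|\le CT$.

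If $\beta$ is not bounded away from $-1$, Lemma~\ref{Kazamaki_lemma} does not apply directly. In that case we would instead use the comparison theorem of \cite{Royer06} against the explicit bounded solutions of the BSDEs with drivers $\pm C(1+|z|\cdots)$. Alternatively, we would apply the comparison theorem with deterministic sub- and supersolutions $Y^\pm_t=\pm C(T-t)$, which requires checking the monotonicity condition (iii) of Lemma~\ref{lem-ref-f}, namely a weight $k\ge -1$, for the linear-in-$u$ driver.
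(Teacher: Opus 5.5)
Your Steps 1--3 are the paper's argument. For fixed $p$, the maps $f^1(\cdot,\cdot,p)$ and $f^2(g,\cdot,p)$ are affine in $(z,u)$, with coefficients bounded uniformly on the bounded constraint set. So $f$ is globally Lipschitz, and the classical theory gives a unique solution in $\mathcal{S}^2\times L^2(W)\times L^2(\widetilde N)$.

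The routes diverge in the upgrade to $S^\infty$.
\begin{itemize}
\item The paper applies It\^o's formula to $e^{\beta t}Y_t^2$ and uses Young's inequality to bound $\mathbb{E}_t[\int_t^T(|Z_s|^2+\|U_s\|^2_{L^2(\nu)})ds]$ by $\mathbb{E}_t[\int_t^T Y_s\,ds]$. It feeds this into the conditional-expectation bound on $Y_t$ and concludes by Gr\"onwall. It bounds $Y$ only from above, after asserting $Y\ge 0$, and reuses this energy estimate for the BMO claims in the subsequent remark.
\item You linearize along a measurable selection $\hat p$ of $\varepsilon$-maximizers and represent $Y$ under a nonnegative change of measure. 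This gives an explicit two-sided bound $|Y_t|\le C(T-t)$ with no energy estimate.
\end{itemize}

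Your route is sound once stated precisely.
\begin{itemize}
\item The selection gives the exact identity $f(Z_t,U_t)=c_t+\langle b_t,Z_t\rangle+\int_E\beta_t(e)U_t(e)\,\nu(de)$, with $c_t=a(\hat p_t)+e_t$ and $0\le e_t\le\varepsilon$, plus the analogous $\mu(dg)$-part. The intercept is therefore a bounded process, not $f(0,0)$.
\item Because $d\langle W^i,W^j\rangle_t=\rho_{ij}\,dt$, the Brownian kernel of the density must be $\Sigma^{-1}b_t$, not $b_t$.
\item Your worry about $\beta$ not being bounded away from $-1$ is unnecessary. With $-1\le\beta\le C$ and $\nu(E)<\infty$, let $\Gamma^t$ be the stochastic exponential of $\int_t^\cdot\langle\Sigma^{-1}b_s,dW_s\rangle+\int_t^\cdot\int_E\beta_s(e)\,\widetilde N(ds,de)$ started at time $t$. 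It is a nonnegative true martingale lying in every $\mathcal{S}^p$.
\item That is all you need for $Y_t=\mathbb{E}[\int_t^T\Gamma^t_s c_s\,ds\mid\mathcal{F}_t]$, using $Y\in\mathcal{S}^2$ from Step 3 for the true-martingale property. Hence $|Y_t|\le (T-t)\sup|c|$, and Kazamaki's criterion plays no role.
\end{itemize}

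Both routes do more work than needed. The driver $f$ has no $(t,\omega)$-dependence and the terminal value is $0$. So $(Y_t,Z_t,U_t)=((T-t)f(0,0),0,0)$ solves \eqref{BSDEpow}, with $|f(0,0)|<\infty$ by your Step 2. By the uniqueness from Step 3 it is the solution. Hence $Y\in S^\infty$, and the later BMO statements, follow immediately. Your fallback comparison with $Y^\pm_t=\pm C(T-t)$ is one step from this observation.
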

\begin{proof}
Let $(z, u), (\tilde{z}, \tilde{u}) \in \R^d\times L^2(\nu)$, writing $z = \tilde{z} + z - \tilde{z}, u = \tilde{u} + u - \tilde{u}$ we have 
\begin{align*}
 f^1(z, u, p) &= f^1(\tilde{z}, u, p) + \gamma \sum_{i,j=1}^d \sigma_i p^i\, \rho_{ij} (z^j - \tilde{z}^j) + \int_{\gamma=0}\Big(u(e) - \tilde{u}(e)\Big)\Big(
\big[1 + \sum_{i=1}^d p^i(\gamma(e)) \eta_i(e)\big]^\gamma - 1
\Big)\nu(de), \\
f^2(g, u,p) &= f^2(g, \tilde{u},p) + \int_{\gamma=g}\Big(u(e) - \tilde{u}(e)\Big)\Big(
\big[1 + \sum_{i=1}^d p^i(\gamma(e)) \eta_i(e)\big]^\gamma - 1
\Big)K(g, de).
\end{align*}
Using the compactness of $\bar{C} \cap \Phi(0)$ and $\bar{C} \cap \Phi(g)$, the boundedness of $\eta_i$ and the coefficients $\rho_{ij}$, together with the Cauchy-Schwarz inequality and the subadditivity of the supremum, we can find a constant $C > 0$ such that
\begin{align}
f(z, u) - f(\tilde{z}, \tilde{u}) 
\leq C\bigl( |z - \tilde{z}| + \|u - \tilde{u}\|_{L^2(\nu)} \bigr).
\end{align}
By symmetry, we deduce that $f$ is Lipschitz continuous. Classical results on BSDE with jumps yield existence and uniqueness of solution of \eqref{BSDEpow} in $(Y,Z,U) \in S^2 \times L^2(W) \times L^2(\widetilde{N})$ (see for example theorem 4.5 in \cite{oksendal2019applied}).\\
It remains to prove that $Y \in \mathcal{S}^\infty$. Since $Y \geq 0$, it suffices to derive an upper bound.

First, we observe that
\begin{align}
|f(Z,U)| 
&\leq |f(0,0)| + C\big(|Z| + \|U\|_{L^2(\nu)}\big) \\
&\leq |f(0,0)| + C + \frac{C}{2}\big(|Z|^2 + \|U\|_{L^2(\nu)}^2\big).
\end{align}

Taking conditional expectation in \eqref{BSDEpow} with respect to $\mathcal{F}_t$ yields
\begin{equation} \label{first_step}
Y_t \leq 1 + (T-t)\big(C + |f(0,0)|\big) 
+ \frac{C}{2}\,\mathbb{E}_t\!\left[
\int_t^T |Z_s|^2 ds 
+ \int_t^T \int_E U_s^2(e)\,\nu(de)\,ds
\right].
\end{equation}

Let $\beta > 0$ (to be fixed later). The main idea is to obtain an upper bound for
\[
\mathbb{E}_t\!\left[
\int_t^T e^{\beta s}|Z_s|^2 ds 
+ \int_t^T \int_E e^{\beta s} U_s^2(e)\,\nu(de)\,ds
\right]
\]
in terms of $Y_t$. 

On the one hand, applying Itô's formula to $e^{\beta t} Y_t^2$ and taking conditional expectation yields
\begin{align}\label{a_priori_est}
e^{\beta T} 
+ 2\,\mathbb{E}_t\!\left[\int_t^T e^{\beta s} Y_s f(Z_s,U_s)\,ds\right]
&= \mathbb{E}_t\!\left[\int_t^T e^{\beta s} \Big( 
\beta Y_s^2 + Z_s^\top \Sigma Z_s + \int_E U_s^2(e)\,\nu(de)
\Big)\,ds \right] \\
&\leq \mathbb{E}_t\!\left[\int_t^T e^{\beta s} \Big(
\beta Y_s^2 + \lambda_{\min} |Z_s|^2 + \int_E U_s^2(e)\,\nu(de)
\Big)\,ds \right],
\end{align}
where $\lambda_{\min}$ denotes the smallest eigenvalue of the correlation matrix $\Sigma$.

Using Young's inequality, for all $a,b \in \mathbb{R}$ and $\varepsilon > 0$,
\[
2ab \leq \frac{a^2}{\varepsilon} + \varepsilon b^2,
\]
we obtain
\begin{align}
2 Y_s f(Z_s,U_s)
&\leq 2|f(0,0)| Y_s + 2C Y_s \big(|Z_s| + \|U_s\|_{L^2(\nu)}\big) \\
&\leq 2|f(0,0)| Y_s 
+ \frac{2C^2}{\varepsilon} Y_s^2 
+ \varepsilon \big(|Z_s|^2 + \|U_s\|_{L^2(\nu)}^2\big).
\end{align}

Therefore,
\begin{align}
\mathbb{E}_t\!\Bigg[\int_t^T e^{\beta s} \Big(
(\beta - \tfrac{C^2}{\varepsilon}) Y_s^2 
+ (\lambda_{\min} - \varepsilon) |Z_s|^2 
+ (1 - \varepsilon) \int_E U_s^2(e)\,\nu(de)
\Big)\,ds \Bigg]
\leq e^{\beta T} + 2|f(0,0)| \mathbb{E}_t\!\left[\int_t^T Y_s\,ds\right].
\end{align}

Choosing $\varepsilon < \min(\lambda_{\min},1)$ and $\beta > \frac{2C^2}{\varepsilon}$, we deduce the existence of a constant $C_{\beta,\varepsilon} > 0$ such that
\begin{align}
\mathbb{E}_t\!\Bigg[
\int_t^T e^{\beta s} \big(Y_s^2 + |Z_s|^2\big)\,ds
+ \int_t^T \int_E e^{\beta s} U_s^2(e)\,\nu(de)\,ds
\Bigg]
\leq C_{\beta,\varepsilon}
\left(
e^{\beta T} + 2|f(0,0)| \mathbb{E}_t\!\left[\int_t^T Y_s\,ds\right]
\right).
\end{align}

Combining this estimate with \eqref{first_step} and applying Grönwall's lemma, we conclude that $Y \in \mathcal{S}^\infty$.

\end{proof}

\begin{Remark}
    Furthermore, from \eqref{a_priori_est}, there exists a constant $C_1 > 0$ such that for all stopping times $\tau$,
\begin{align}
\mathbb{E}_{\tau}\!\left[
\int_{\tau}^T e^{\beta s} \Big(
\beta Y_s^2 + \lambda_{\min} |Z_s|^2 + \int_E U_s^2(e)\,\nu(de)
\Big)\,ds
\right] \leq C_1.
\end{align}

In particular, the martingales
\[
\sum_{i=1}^d \int_0^. Z_s^i \, dW_s^i
\quad \text{and} \quad
\int_0^. \int_E U_s(e)\,\widetilde{N}(ds,de)
\]
are BMO martingales.
\end{Remark}
We characterize optimal strategies by the following theorem 
\begin{Theorem}
Let $(Y,Z,U)\in S^\infty \times L^2(W) \times L^2(\widetilde{N})$ the solution of \eqref{BSDEpow}.Then, there exists a Borel map 
$p^*:~\R\times\R^d\times  L^2(\nu)\rightarrow\R^d$  satisfying
\begin{align} \label{mindriveratpistar_pow}
f(z, u) & =  f^1(z,u,p^*(0,z,u))+
\int_{\gamma(E) \setminus \{0\}}  f^2(g,u,p^*(g,z,u))
\mu(dg)
\end{align}
and
\begin{align}
    p^*(g,z,u) & \in   \bar{C} \cap \Phi(g)
\end{align}
    
for all $g,z,u\in\gamma(E)\times\R ^d\times L^2(\nu)$, and we have $V(x)=\frac{x^\gamma}{\gamma}\exp(Y_0)$ and $\alpha^*=(p^*(\Delta G_t,Z_t,U_t))_{t\in[0,T]}$ is an optimal strategy in $\Ac_l$.
\end{Theorem}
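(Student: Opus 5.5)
We apply Proposition \ref{MgOptPr} (stated for $U_\lambda$, but its proof uses only (i)--(iv)) to the family $R^\alpha_t=\frac{(X^{x,\alpha}_t)^\gamma}{\gamma}M_t$. Here $M$ is the Doléans--Dade exponential $M=\mathcal{E}(Y)$, normalised so that $M_T$ corresponds to $Y_T=0$. The value $V(x)=\frac{x^\gamma}{\gamma}\exp(Y_0)$ indicates that the intended normalisation is $M_t=\exp(Y_t)$ (equivalently, $\frac{\gamma x^{\gamma}}{\gamma}e^{Y_0}$ up to the multiplicative convention). I would therefore first fix the convention $R^\alpha_t=\frac{(X^{x,\alpha}_t)^\gamma}{\gamma}\exp(Y_t)$. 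With this choice the Itô computation above is unchanged up to replacing $U$ by $e^{U}-1$ and adding the quadratic terms. Then (i) holds since $Y_T=0$, and (ii) holds since $R^\alpha_0=\frac{x^\gamma}{\gamma}e^{Y_0}$ for every $\alpha$.

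\textbf{Existence and measurability of $p^*$.} For fixed $(z,u)$, the map $p\mapsto f^1(z,u,p)$ is continuous on $\bar C\cap\Phi(0)$. Its concavity follows from two facts: the quadratic form $\frac{\gamma(\gamma-1)}{2}p^\top \mathrm{diag}(\sigma)\Sigma\,\mathrm{diag}(\sigma)p$ is concave because $\gamma<1$ and $\Sigma$ is positive definite, and $x\mapsto(1+x)^\gamma$ is concave. The concavity argument needs $1+u\ge 0$, which holds in the multiplicative formulation because $1+U=e^{U}>0$. The set $\Phi(0)$ may be half-open at $-1/(d\bar\eta_i)$, but $\Phi(0)$ is defined so that $1+\langle p,\eta\rangle\ge0$ $\nu$-a.e.; its closure is harmless since the integrand extends continuously with value $-1$ at the boundary. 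Hence a maximiser exists on the compact set $\overline{\bar C\cap\Phi(0)}$, and the same holds for $f^2(g,u,\cdot)$ on $\bar C\cap\Phi(g)$. For a Borel selection I would invoke a measurable maximum theorem (Kuratowski--Ryll-Nardzewski for the Carathéodory integrand $(g,z,u,p)\mapsto f^i$ with the Borel, compact-valued correspondence $g\mapsto \bar C\cap\Phi(g)$). This avoids the coordinatewise induction used in the exponential case. Pasting the selection on $\{g=0\}$ with that on $\gamma(E)\setminus\{0\}$ gives $p^*$ satisfying \eqref{mindriveratpistar_pow}.

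\textbf{Supermartingale and martingale properties.} By construction of $f$ as a supremum, for any $\alpha\in\mathcal A_\ell$ the drift $\Lambda_t\le0$, with equality for $\alpha^*=p^*(\Delta G_t,Z_t,U_t)$. Moreover $\alpha^*\in\mathcal A_\ell$ because $p^*$ is Borel and $(Z,U)$ is predictable. Thus $R^\alpha$ is a nonnegative local supermartingale, hence a true supermartingale by Fatou, which gives (iii). The main obstacle is (iv): showing that the local martingale $R^{\alpha^*}$ is a true martingale. I would write $R^{\alpha^*}=R_0\,\mathcal{E}(L)$, where
\[
L=\sum_i\int(Z^i+\gamma\sigma_i\alpha^{*,i})\,dW^i+\int\!\!\int\Theta\,\widetilde N .
\]
By the remark above, $\int Z\,dW$ and $\int U\,d\widetilde N$ are BMO. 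The terms involving $\alpha^*$ are bounded, since $\bar C$ is compact and $\eta$ is bounded, and $\nu$ is finite. Therefore $L$ is BMO with bounded jumps. For Kazamaki's criterion (Lemma \ref{Kazamaki_lemma}) we need $\Delta L\ge-1+\delta$, i.e.\ $(1+U)(1+\langle\alpha^*,\eta\rangle)^\gamma\ge\delta$.

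This is delicate at the left endpoint of $\Phi$, where $1+\langle p,\eta\rangle$ can vanish. I would first try to show that the optimiser stays away from that boundary. The derivative of $(1+x)^\gamma$ blows up there, so any maximiser keeps $1+\langle p^*,\eta\rangle\ge\delta_0>0$ $\nu$-a.e., and $1+U$ is bounded below because $Y\in S^\infty$ yields $|U|\le 2\|Y\|_\infty$. Should this boundary argument fail, the fallback is to localise with stopping times and use uniform integrability of $(X^{\alpha^*})^\gamma$ obtained from an $L^{p}$ bound on $\mathcal{E}(L)$ via reverse Hölder for BMO exponentials. Once (iv) holds, Proposition \ref{MgOptPr} gives $V(x)=R_0=\frac{x^\gamma}{\gamma}e^{Y_0}$ and the optimality of $\alpha^*$.
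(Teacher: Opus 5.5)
Your route is the paper's: martingale optimality with $R^\alpha$, Fatou for (iii), BMO plus Kazamaki for (iv), and a measurable maximum theorem for the selection (the paper cites Aliprantis--Border, Thm.~18.19). The step you single out as the crux, however, does not go through. You argue that because the derivative of $(1+x)^\gamma$ blows up at $x=-1$, every maximiser satisfies $1+\langle p^*,\eta\rangle\ge\delta_0>0$ $\nu$-a.e. This is false. Only the pointwise derivative blows up; the integrated derivative $\gamma\int(1+u)(1+\langle p,\eta\rangle)^{\gamma-1}\langle q,\eta\rangle\,\nu(de)$ stays finite whenever $\eta$ has a density near its essential infimum, because $\gamma-1>-1$. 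For instance, take $d=1$, no signal, $\eta$ uniformly distributed on $[-\frac12,\frac12]$ under $\nu$, and $[0,2]\subset\bar C$. Then the solution has $Z\equiv0$ and $U\equiv0$, $\Phi(0)=\,]-2,2]$, and for $\kappa$ large the maximiser is $p^*=2$. Consequently $\Delta L=(1+2\eta)^\gamma-1$ has essential infimum $-1$, and Lemma~\ref{Kazamaki_lemma} is not applicable. Your fallback has the same problem: reverse H\"older inequalities for exponentials of discontinuous BMO martingales also require a lower bound of the form $\Delta L\ge-1+\delta$. The paper applies Kazamaki's criterion without checking this jump condition at all, so its proof has the same hole.

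The clean fix bypasses BMO entirely. With your normalisation, $e^{Y}$ is bounded because $Y\in S^\infty$. Since $\alpha^*$ is bounded, $\eta$ is bounded and $\nu$ is finite, there is a constant $c$ with $\sup_{t\le T}X^{x,\alpha^*}_t\le x\,e^{cT}\,\sup_{t\le T}\exp\bigl(\sum_i\int_0^t\alpha^{*,i}_s\sigma_i\,dW^i_s\bigr)\,(1+c)^{N([0,T]\times E)}$. Both random factors have moments of all orders: the first is the exponential of a Brownian integral with bounded integrand, and $N([0,T]\times E)$ is Poisson. Hence $\sup_t|R^{\alpha^*}_t|\in L^1$, and the local martingale $R^{\alpha^*}$ is a true martingale.

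Two smaller points. First, switching to $M=e^{Y}$ is not cosmetic. It replaces the Lipschitz equation \eqref{BSDEpow} with driver \eqref{driver_pow} by a quadratic--exponential BSDE, so the BMO remark you quote, which was derived for the Lipschitz equation, is not available as stated; with the domination argument above you no longer need it. Second, a maximiser over the closure $\overline{\bar C\cap\Phi(0)}$ may lie on the removed endpoint $p^i=-1/(d\bar\eta_i)$ (same example with $\kappa$ very negative and $[-2,0]\subset\bar C$), in which case $\alpha^*\notin\mathcal{A}_\ell$. Continuity of the integrand on the closure does not prevent this; you need $\bar C$ to exclude that endpoint, which the paper's compactness claim tacitly assumes as well.
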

\begin{proof}
Let $(Y,Z,U)\in S^\infty \times L^2(W) \times L^2(\widetilde{N})$  be the solution  BSDEJ \eqref{BSDEpow} with $f$ given by \eqref{driver_pow} and consider the family $(R^\alpha)_\alpha$ defined by \eqref{defRpi}.

The family $(R^\alpha)_\alpha$  satisfies (i) and (ii) of Proposition \ref{MgOptPr} as for any $\alpha\in \Ac_\ell$.  From the definition of $f$ and the computations made above, we observe that $R^\alpha$  is a nonnegative local supermartingale. Hence by Fatou's Lemma it is a supermatingale and (iii) of  Proposition \ref{MgOptPr} is satisfied. Suppose there exists a Borel function $p^*$ satisfying 
\eqref{mindriveratpistar_pow}. For $\alpha = \alpha^*$, the process $(R^{\alpha^*}_t)_{t \in [0,T]}$ is a local martingale. 
We recall that 
$\sum_{i=1}^d \int_0^. Z_s^i \, dW_s^i$ and 
$\int_0^. \int_E U_s(e)\,\widetilde{N}(ds,de)$ are BMO martingales.

Since $p$ takes values in a compact set, it follows that 
$\sum_{i=1}^d \int_0^. \Gamma_t^i \, dW_t^i$ is also a BMO martingale. 
Moreover, by Young's inequality and using the fact that 
$p_t^i(\gamma(e))$ and $\eta_i(e)$ take values in a compact set for all $e \in E$ and $t \in [0,T]$, 
we deduce that the martingale 
$\int_0^. \int_E \Theta_t(e)\,\widetilde{N}(dt,de)$ is of BMO type.

Finally, applying Kazamaki's criterion \eqref{Kazamaki_lemma}, 
we conclude that $(R^{\alpha^*}_t)_{t \in [0,T]}$ is a true martingale. It remains to show existence and measurability property of $p^*$. Let $(z, u) \in \mathbb{R}^d \times L^2(\nu)$ by the dominated convergence theorem, the maps $p \mapsto f^1(z, u, p)$ and $p \mapsto f^2(g, z, u, p)$ are continuous. By compactness of the optimization sets $\bar{C} \cap \Phi(0)$ and $\bar{C} \cap \Phi(g)$ their respective maximizers $p^*(0, z, u)$ and $p^*(g, z, u)$ are attained. Since $\gamma(E)$ is a Borel subset of $\mathbb{R}$ by assumption, $f^1$ and $f^2$ are Carathéodory functions i.e continuous in $p$ and measurable in the other variables. Therefore
the maps $g \mapsto p^*(g,z, u)$ is a Borel measurable due to theorem 18.19 in \cite{aliprantis2006infinite}.
\end{proof}
\subsection{Logarithm utility case} 

For the logarithm utility function we consider the set of admissible strategies as in the power utility case and assume a deterministic time dependent volatility. The optimization problem is given by 
\beq\label{defV_l(x)}
    V_{\mathrm{sgn}}(x) & = & \sup_{\alpha \in \mathcal{A}_{\ell}}\E\big[\log(X^{x,\alpha}_T)\big]\;
\enq 
where the initial capital $x >0$. We apply the martingale optimality principle as in the previous section which consists in find a process $R^{\alpha}$ with $R^{\alpha}_T = \log(X^{\alpha}_T)$ and an initial value that is independent of $\alpha$. Moreover, $R^{\alpha}$ is a supermartingale for all $\alpha \in \mathcal{A}_{\ell}$ and there exists a $\alpha^*$ such that $R^{\alpha^*}$ is a martingale. Then $\alpha^*$ is an optimal strategy and $R_0^{\alpha^*}$ is the value function of the optimization problem \eqref{defV_l(x)}.
For $x >-1$, we introduce 
\begin{equation*}
k(x) = \log(1+x).  
\end{equation*}
An application of Itô's formula gives
\begin{align*}
d\log X^{x,\alpha}_t &= \left[
\langle p_t(0),\, \tilde{\kappa} \rangle 
- \frac{1}{2} p_t(0)^\top \operatorname{diag}(\sigma_t) \Sigma \operatorname{diag}(\sigma_t)\,p_t(0) 
+ \int_E k\bigl(\langle p_t(0),\, \eta(e) \rangle\bigr)\,\nu(de)
\right]dt \\
&\quad + \bigl\langle p_t(0)\operatorname{diag}(\sigma_t),\, dW_t \bigr\rangle 
+ \int_E k\!\left(\langle p_t(\gamma(e)),\, \eta(e) \rangle\right)\tilde{N}(dt,\, de)
\end{align*} for $t \in [0, T]$.\\

We define the process $R^\alpha$ by, for all $t \in [0,T]$,
\begin{equation*}
R^\alpha_t = \log\bigl(X^{x,\alpha}_t\bigr) + Y_t,
\end{equation*}
where
\begin{equation*}
Y_t = \int_t^T f(s)\,ds 
+ \int_t^T \langle Z_s,\, dW_s \rangle 
+ \int_t^T \int_{E} U_s(e)\,\tilde{N}(ds,de).
\end{equation*}

Using the previous representation and the disintegration formula, we obtain
\begin{align}\label{driver_log}
f(t) &= \sup_{p \in \bar{C} \cap \Phi(0)} f^1(t, p)
+ \int_{\gamma(E)\setminus\{0\}}
\sup_{p \in \bar{C} \cap \Phi(g)} f^2(p,g)\,\mu(dg),
\end{align}
where
\begin{align*}
f^1(t,p) &:= \langle p,\,\tilde{\kappa}\rangle
- \frac{1}{2} p^\top \operatorname{diag}(\sigma_t) \Sigma \operatorname{diag}(\sigma_t)\,p 
+ \int_{\{\gamma=0\}} k\bigl(\langle p,\,\eta(e)\rangle\bigr)\,\nu(de),\\[4pt]
f^2(p,g) &:= \int_{\{\gamma=g\}}
k\bigl(\langle p,\,\eta(e)\rangle\bigr)\,K(g,de),
\end{align*}
and
\[
\Phi(0) = \Phi^1(0)\times\cdots\times\Phi^d(0),
\qquad
\Phi(g) = \Phi^1(g)\times\cdots\times\Phi^d(g),
\quad g \in \gamma(E)\setminus\{0\}.
\]

In particular, the driver $f$ does not depend on $(Y,Z,U)$.

\begin{Theorem}\label{thm:log}
Suppose the BSDEJ with terminal condition $\xi = F$ and driver $f$ given by
\eqref{driver_log} admits a solution
$(Y,Z,U) \in \mathcal{S}^2 \times L^2(W) \times L^2(\widetilde{N})$.
Then there exists a Borel map $p^* : \gamma(E) \to \mathbb{R}^d$ satisfying
\begin{equation}\label{mindriveratpistar}
f(t) = f^1\!\left(p^*_t(0)\right)
+ \int_{\gamma(E)\setminus\{0\}} f^2\!\left(p^*_t(g),\,g\right)\mu(dg),
\end{equation}
with $p^*_t(g) \in \bar{C} \cap \Phi(g)$ for all $g \in \gamma(E)$.
Moreover, $V(x) = \log(x) + Y_0$ and
$\alpha^* = (p^*_t(\Delta G_t))_{t\in[0,T]}$ is optimal in $\mathcal{A}_\ell$.
\end{Theorem}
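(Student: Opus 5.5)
The plan is to verify the four conditions of Proposition~\ref{MgOptPr}, adapted to the logarithmic setting, for the family $R^\alpha_t=\log(X^{x,\alpha}_t)+Y_t$. I will take the terminal value $Y_T=0$, so that $R^\alpha_T=\log X^{x,\alpha}_T$; a nonzero $F$ would simply add the constant $\E[F]$. Condition (ii) then holds trivially, since $R^\alpha_0=\log x+Y_0$ for every $\alpha\in\mathcal{A}_\ell$.

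For condition (iii), I add the It\^o expansion of $\log X^{x,\alpha}$ to the BSDE dynamics of $Y$. I also use the semimartingale decomposition of the jump integral with signal-dependent integrand, written as in Section~\ref{sect3}: the $\widetilde N$-integral of $k(\langle p_t(\gamma(e)),\eta(e)\rangle)$ produces a drift $\int_E [k(\langle p_t(\gamma(e)),\eta\rangle)-k(\langle p_t(0),\eta\rangle)]\nu(de)$. After applying the disintegration $\nu(de\cap\{\gamma\neq0\})=\int K(g,de)\mu(dg)$, the total $dt$-coefficient of $R^\alpha$ becomes $f^1(t,p_t(0))+\int f^2(p_t(g),g)\mu(dg)-f(t)$. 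By definition \eqref{driver_log}, this is $\le 0$ for every admissible $p$, so $R^\alpha$ is a local supermartingale.

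To upgrade this to a true supermartingale, I will bound the local martingale part. Its Brownian integrand $\operatorname{diag}(\sigma_t)p_t(0)$ is bounded, because $\bar C$ is compact and $\sigma$ is deterministic and bounded. For the jump integrand $k(\langle p,\eta\rangle)$, the constraint $p\in\Phi(g)$ forces $1+\langle p,\eta\rangle>0$; the factor $1/d$ in $\Phi^i$ is there precisely to make this work for the sum over assets. The delicate point is that the closed endpoint $-1/(d\underline\eta_i)$ can make $1+\langle p,\eta\rangle$ arbitrarily small, so $k$ is only bounded above. I plan to handle this by noting that $\log X^{x,\alpha}_T$ has an integrable positive part, since $\log y\le y$ and $X$ has all moments. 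The negative parts can then be controlled via Fatou's lemma applied along a localizing sequence, using the fact that $Y\in\mathcal S^2$ together with $\nu$ finite and $\sup_t \E|\log X_t|<\infty$ under the admissibility bounds. This yields $\E[R^\alpha_T]\le R_0$.

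For existence and measurability of the maximizer, I argue as follows. For fixed $t$ the map $p\mapsto f^1(t,p)$ is continuous and strictly concave on $\bar C\cap\Phi(0)$: the quadratic term is negative definite because $\Sigma$ is nonsingular and $\sigma_t>0$, and $k$ is concave. Likewise, $p\mapsto f^2(p,g)$ is concave and upper semicontinuous, taking the value $-\infty$ at the boundary where $1+\langle p,\eta\rangle=0$ on a $K(g,\cdot)$-positive set. Hence the supremum is attained on the compact closure, and the maximizer does not lie in the region where $f^2=-\infty$. Since $f^1,f^2$ are Carath\'eodory in $(g,p)$ and $\gamma(E)$ is Borel, the measurable maximum theorem (Theorem 18.19 in \cite{aliprantis2006infinite}) gives a Borel selection $g\mapsto p^*_t(g)$, exactly as in the power case. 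Then $\alpha^*=(p^*_t(\Delta G_t))_t$ lies in $\mathcal{A}_\ell$ and makes the drift of $R^{\alpha^*}$ vanish.

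The main obstacle is condition (iv): showing that the local martingale $R^{\alpha^*}$ is a true martingale. I would try to show that $k(\langle p^*(g),\eta\rangle)$ is bounded below, uniformly in $g$, because the optimizer stays a positive distance from the singular boundary; the idea is that the derivative of $f^2$ blows up there and this pushes the optimizer away. If that uniform bound holds, the stochastic integrand is bounded, the integral against $\widetilde N$ is an $L^2$ martingale, and $Y$ is a martingale plus a deterministic term. Should uniformity fail, the fallback is to take expectations in the square-integrable decomposition and use only $\int|k|^2\,d\nu<\infty$ for the optimal $p^*$, which follows from the finiteness of the optimal value. Proposition~\ref{MgOptPr} then yields $V(x)=\log x+Y_0$ and the optimality of $\alpha^*$.
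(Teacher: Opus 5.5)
Your route is the paper's. You verify Proposition~\ref{MgOptPr} for $R^\alpha=\log X^{x,\alpha}+Y$, read off the drift $f^1(t,p_t(0))+\int f^2(p_t(g),g)\mu(dg)-f(t)\le 0$ after disintegration, get maximizers from concavity and compactness, and select them via Theorem 18.19 of \cite{aliprantis2006infinite}. The paper settles (iii) by ``boundedness of $p$'' and (iv) ``by construction''. You are right that the real work is here, because $k(\langle p,\eta\rangle)$ is unbounded below near the closed endpoints $-1/(d\underline\eta_i)$ of $\Phi^i$. Your treatment of $f^2$ as concave, upper semicontinuous and possibly $-\infty$ is also more accurate than the paper's $C^2$ claim. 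However, your arguments for (iii) and (iv) do not close the gap:
\begin{itemize}
\item Fatou along a localizing sequence gives $\mathbb{E}[R^\alpha_T]\le\liminf_n\mathbb{E}[R^\alpha_{T\wedge\tau_n}]\le R_0$ only if the $R^\alpha_{T\wedge\tau_n}$ are dominated \emph{from below} by an integrable variable. Integrability of the positive part of $\log X_T$ is the wrong side, and Fatou on the negative parts yields the inequality in the useless direction.
\item The claim $\sup_t\mathbb{E}|\log X^{x,\alpha}_t|<\infty$ is false for admissible strategies. At the closed endpoints, $1+\langle p,\eta\rangle$ can vanish on a charged set (wealth jumps to $0$), or $\int k^-(\langle p,\eta\rangle)\,d\nu$ can be infinite. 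In either case $\mathbb{E}[\log X^{x,\alpha}_T]=-\infty$.
\item For (iv), the optimizer need not stay uniformly away from the singular set. When $K(g,\cdot)$ puts little mass near $\{1+\langle p,\eta\rangle=0\}$, the one-sided derivative of $f^2$ at the closed endpoint stays finite, and $p^*(g)$ can sit on that endpoint.
\item Finiteness of the value gives only $k(\langle p^*,\eta\rangle)\in L^1(\nu)$, not $L^2(\nu)$.
\end{itemize}

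The missing idea is that no localization is needed. The jump integrand $k(\langle p_t(\gamma(e)),\eta(e)\rangle)$ is $\mathcal{P}\otimes\mathcal{E}$-measurable and bounded above. Apply the compensator identity $\mathbb{E}\int\!\!\int\phi\,N(dt,de)=\mathbb{E}\int\!\!\int\phi\,\nu(de)\,dt$ for nonnegative predictable $\phi$ separately to $k^+$ and $k^-$, and use that the Brownian integrand is bounded. This gives, with values in $[-\infty,\infty)$,
\[
\mathbb{E}\big[\log X^{x,\alpha}_T\big]=\log x+\mathbb{E}\int_0^T\Big(f^1(t,p_t(0))+\int_{\gamma(E)\setminus\{0\}}f^2(p_t(g),g)\,\mu(dg)\Big)dt\;\le\;\log x+\int_0^T f(t)\,dt=\log x+Y_0 .
\]
Since $f$ is deterministic, $Y_t=\int_t^T f(s)\,ds$ and $Z=U=0$ (with $Y_T=0$, as you correctly take). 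The integral $\int_0^T f\,dt$ is finite by hypothesis. That finiteness makes every term finite at $\alpha^*$ and turns the inequality into an equality, with no $L^2$ bound or uniform distance to the boundary required.

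Finally, ``attained on the compact closure, away from $\{f^2=-\infty\}$'' does not give an admissible maximizer. The closure of $\bar C\cap\Phi(g)$ contains the \emph{open} endpoints $-1/(d\bar\eta_i(g))$, where $f^2$ can be finite, so the argmax may leave $\Phi(g)$. The paper hides this behind its (in general false) compactness claim. You need an extra assumption, for example $\bar C_i$ bounded away from that endpoint, as in Section~\ref{sect7} where $\bar C\cap\Phi(g)=[0,\tfrac12]^2$.
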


\begin{proof}
For any $\alpha \in \mathcal{A}_\ell$, the process $R^\alpha$ is a supermartingale
from the above computations combined with the boundedness of $p \in
\bar{C} \cap \Phi(\Delta G_t)$. At $\alpha = \alpha^*$, $R^{\alpha^*}$ is a
true martingale by construction.

It remains to show existence and measurability of $p^*$. By the dominated
convergence theorem, the maps $p \mapsto f^1(p)$ and $p \mapsto f^2(p,g)$
are twice continuously differentiable, with Hessians
\begin{align*}
D^2_p f^1(p) &= -\,\mathrm{diag}(\sigma)
+ \int_{\{\gamma=0\}} k''\!\bigl(\langle p,\,\eta(e)\rangle\bigr)
\,\mathrm{diag}\!\left(\eta_1(e)^2,\ldots,\eta_d(e)^2\right)\nu(de),\\[4pt]
D^2_p f^2(p,g) &= \int_{\{\gamma=g\}} k''\!\bigl(\langle p,\,\eta(e)\rangle\bigr)
\,\mathrm{diag}\!\left(\eta_1(e)^2,\ldots,\eta_d(e)^2\right)K(g,de),
\end{align*}
where $k''(x) = -1/(1+x)^2 < 0$ for $x > -1$. Since both Hessians are
negative definite (under assumption \eqref{assump}), the maps $p \mapsto f^1(p)$ and
$p \mapsto f^2(p,g)$ are strictly concave in $p$. By compactness of
$\bar{C} \cap \Phi(0), \bar{C} \cap \Phi(g)$ the suprema in \eqref{mindriveratpistar} are
attained at unique maximizers $p^*(0)$ and $p^*(g)$ for $f^1$ and $f^2$
respectively. Since $\gamma(E)$ is a Borel subset of $\mathbb{R}$ by assumption, $f^1$ and $f^2$ are Carathéodory functions i.e continuous in $p$ and measurable in the other variables. Therefore
the maps $g \mapsto p^*(g,z, u)$ is a Borel measurable due to theorem 18.19 in \cite{aliprantis2006infinite}.
\end{proof}
In fact we have existence and uniqueness of the previous BSDEJ as the driver does not depend on $(Y, Z, U)$ and is measurable cf \cite{delong13}.

\section{Individuals jumps signal}\label{sect6}
In this section, we study the idiosyncratic case and consider the Merton 
problem when the investor receives different signal from each asset 
$i \in \{1, \ldots, d\}$ in the exponential case. 
\subsection{Market model}

We consider the same probability space $(\Omega, \mathcal{A}, \mathbb{P})$ 
and the $d$-dimensional correlated Brownian motion 
$(W^1_t, \ldots, W^d_t)_{t \geq 0}$ as in Section~\ref{sect2}. In addition, 
we now consider $d$ \emph{independent} homogeneous Poisson random measures 
$N^1, \ldots, N^d$, where each $N^i$ is defined on $\mathbb{R}_+ \times E$ 
with compensator $dt \otimes \nu_i(de)$, and $\nu_i$ is a finite positive 
measure on $(E, \mathcal{E})$ satisfying
\begin{equation*}
    \nu_i(\{0\}) = 0, \quad i = 1, \ldots, d.
\end{equation*}
We denote by $\widetilde{N}^i(dt, de) := N^i(dt, de) - \nu_i(de)\,dt$ the 
compensated measure of $N^i$. The filtration $\mathbb{F} = (\mathcal{F}_t)_{t 
\geq 0}$ is the $\mathbb{P}$-completion of the natural filtration generated by 
$W^1, \ldots, W^d$ and $N^1, \ldots, N^d$ and satisfies the usual conditions.

\begin{Remark}
The key difference with Section~\ref{sect2} is the use of $d$ 
\emph{independent} Poisson measures $N^1, \ldots, N^d$ instead of a single 
common measure $N$. Each asset $S^i$ is now subject to its own idiosyncratic 
jump risk, independent of the other assets. This models situations where jumps 
are driven by asset-specific local events (earnings surprises, single-name credit 
events for example) rather than market-wide shocks.
\end{Remark}

The financial market consists of a riskless asset with zero interest rate and 
$d$ risky assets $S^1, \ldots, S^d$ whose dynamics are given by
\begin{equation}\label{eq:SDE_idio}
    \frac{dS^i_t}{S^i_{t-}} = \kappa_i\,dt + \sigma_i\,dW^i_t 
    + \int_E \eta_i(e)\,\widetilde{N}^i(dt, de), 
    \quad t \in [0, T],\quad i = 1, \ldots, d,
\end{equation}
with $S^i_0 = s_i > 0$, where $\kappa_i \in \mathbb{R}$  
$\sigma_i > 0$ , $\theta_i := \kappa_i / \sigma_i$ , and $\eta_i : E \to \mathbb{R}$ satisfying the same assumptions as in \eqref{sect2}, for 
$i = 1, \ldots, d$.
\subsection{Portfolio with signal strategies and exponential utility problem} We consider investment strategies where the investor has access to different extra information on all the stock $S^i$.  More precisely, we suppose that
the extra information on the stock $S^i$ is given by an impending signal process $G^i$ defined by
\begin{equation*}
G^i_{t}   =  \int_{0}^{t} \int_{E}\gamma_i(e) \widetilde{N}^i(ds, de)\;,\quad t \geq 0\;, 
\end{equation*}
where
 $\gamma_i:E \xrightarrow{}{\mathbb{R}}$ is a Borel map such that there exists a constant $C$ satisfying 
 \begin{equation}
\int_E |\gamma_i(e)|^2 \nu_i(de)  <  \infty
 \end{equation} for $i = 1 , \cdots d.$
We also assume that $\gamma^i(E)$ is a Borel subset of $\mathbb{R}$.
We would like to consider strategies $\pi = (\pi^1, \ldots, \pi^d)$ such that for each $i$
\begin{equation}
\pi^i=(\pi^i_t)_{t\in[0,T]} \text{ is }\mathcal{P} \vee \sigma(G^i)\text{-measurable and satisfy  } \eqref{cond-Adm-pi}\;.
\end{equation}
 
By \cite[Lemma 2.2]{bank2022merton} a process $\pi^i$ is $\mathcal{P} \vee \sigma(G^i)$-measurable if and only if there exists maps $p^i:~\R_+\times\Omega\times\R\rightarrow\R$ which is 
 $\mathcal{P} \otimes \mathcal{B}(\mathbb R)$-measurable, such that
\begin{equation}
\pi^i_t  =  p^i_t(\Delta_t G^i)\;,\quad t\geq0 ~\P-a.s.
\end{equation} for $i = 1, ..., d$
where $\Delta_t G^i=G^i_t-G^i_{t-}$ for $t\geq0$.
In particular, we can rewrite the set of such strategies as
\begin{equation}
 \text{Processes }(\pi_t=(p^1_t(\Delta G^1_t), ..., p^d_t(\Delta G^d_t))_{t\in[0,T]} \text{ satisfying  } \eqref{cond-Adm-pi} \mbox{ with } p^i  \text{ being }\mathcal{P} \otimes \mathcal{B}(\mathbb{R})\text{-measurable} \;.
\end{equation}

We define for each $i$ the kernel $K^i:~\mathbb{R}\times \Bc(E)\rightarrow \mathbb{R} $ such that 
\begin{equation}
\nu_i\big(de \cap \{\gamma_i \neq 0\}\big) = \int_{\gamma_i(E) \setminus\{0\}} K^i(g,de)\mu^i(dg),  
\end{equation}
 where $\mu^i$ is the measure image of $\nu_i $ by $\gamma$, $i.e.$ $\mu = \nu \circ \gamma_i^{-1}$. To get such a kernel $K^i$ see \eqref{sect3}.
 
Finally, we shall consider the set of admissible strategies with signal $\mathcal{A}_{\mathrm{sgn}}$ defined by
\begin{equation*}
\Ac_{\mathrm{sgn}}  = \Big\{\text{Processes }(\pi_t=(p^1_t(\Delta G^1_t), ..., p^d_t(\Delta G^d_t)))_{t\in[0,T]} \text{ satisfying  } \eqref{cond-Adm-pi} \mbox{ with } p^1, ..., p^d  \text{ being }\mathcal{P} \otimes \mathcal{B}(\mathbb{R})\text{-measurable}\Big\} \;.
\end{equation*}
For $\pi\in \mathcal{A}_{\mathrm{sgn}}$ and for an initial endowment $x\in\mathbb{R}$, the self financing wealth process $ X^{x,\pi}$ defined by $ X_0^{x,\pi}=x$ and
\begin{equation}\label{defDynX_idio}
           d X_{t}^{x, \pi}  
           = \sum_{i=1}^d \pi^i_t[\kappa_i dt + \sigma_i dW^i_t]  +  \int_{E}\pi^i_t\eta^i(e) \widetilde{N}^i(dt, de),\quad  t\in [0, T].
\end{equation}

We keep the same notations as in \eqref{sect4} and with the new features we define \begin{align*}
\bar{\mathcal{A}}_{\mathrm{sgn}} &= \Bigl\{ 
    \pi = \bigl(p^1_t(\Delta G^1_t), \ldots, p^d_t(\Delta G^d_t)\bigr)_{t\in[0,T]} 
    \;\Big|\; 
    p^1, \ldots, p^d \text{ are } \mathcal{P}\otimes\mathcal{B}(\mathbb{R})\text{-measurable}
    \text{ and satisfy } \eqref{limited-credit-line}
\Bigr\}.
\end{align*}
We recall the value function $V_{\mathrm{sgn}}$  by
\begin{equation}\label{defV(x)}
    V_{\mathrm{sgn}}(x)  =  \sup_{\pi \in \bar\Ac_{\mathrm{sgn}}}\E\big[U_{\lambda}(X^{x,\pi}_T - F)\big]\;,
\end{equation}
for $x\in\mathbb{R}$ where $U_{\lambda}$ is the exponential utility function with risk aversion $\lambda$.
For $i = 1, \dots, d$ we introduce the following space
\begin{itemize}
    
    \item \( L^2(\widetilde{N}^i) = \left\{\mathcal{P} \otimes \mathcal{B}(E)\text{-measurable processes } U \text{ valued in $\R$ such that } \mathbb{E} \left[\int_0^T \int_E U_t^2(e) \, \nu_i(de) \, dt \right] < \infty \right\} \).
    
    \item $L^2(\nu_i) = \{ u : E \xrightarrow{} \mathbb{R} \mbox{ Borel, } \int_{E}u^2(e)\nu_i(de) <\infty \}$. 

\end{itemize}
We use \eqref{MgOptPr} as in \eqref{sect3} by setting
\begin{equation}\label{defRpi}
R^{\pi}_{t} = -\exp\left(-\lambda\left(X^{x,\pi}_{t} - Y_t\right)\right),
\quad t \in [0, T], \quad \pi \in \bar{\mathcal{A}}_{\mathrm{sgn}},
\end{equation}
where $(Y, Z, (U^i)_{i=1}^d) \in S^\infty \times L^2(W) \times \prod_{i=1}^d L^2(\widetilde{N}^i)$ is a solution to the BSDE with jumps
\begin{equation}\label{BSDEexp_idio}
\begin{aligned}
Y_t
&= F + \int_t^T f(s, Z_s, U^1_s, \ldots, U^d_s)\, ds
- \sum_{i=1}^d \int_t^T Z^i_s \, dW^i_s \\
&\quad - \sum_{i=1}^d \int_t^T \int_E U^i_s(e)\, \widetilde{N}^i(ds,de),
\qquad t \in [0, T].
\end{aligned}
\end{equation}

Following the same steps as in Section~\eqref{sect3} (i.e., applying It\^o's formula and using the disintegration property of each measure $\nu_i$), we define the driver $f$ by
\begin{align}
f(z, (u^i)_{i=1}^d)
&= \inf_{p \in C_d} f^1(z,(u^i)_{i=1}^d,p)
+ \sum_{i=1}^d \int_{\gamma_i(E)\setminus\{0\}} \inf_{p \in C_d} f^2(g_i,u^i,p)\, \mu^i(dg) \notag \\
&\quad - \sum_{i=1}^d \left( z^i \theta_i + \frac{\theta_i^2}{2\lambda} \right).
\label{defDriverfexp_idio}
\end{align}

The function $f^1$ is given by
\begin{align}
f^1(z,(u^i)_{i=1}^d,p)
&= \frac{\lambda}{2} \Bigg[
\sum_{i=1}^d \left( p^i\sigma_i - \left( z^i + \frac{\theta_i}{\lambda} \right) \right)^2 \notag  + \sum_{\substack{i,j=1 \\ i \neq j}}^d 
\left( p^i\sigma_i - z^i \right)
\left( p^j\sigma_j - z^j \right)\rho_{ij}
\Bigg] \notag \\
&\quad + \sum_{i=1}^d \int_{\gamma_i = 0}
\Big(h_{\lambda}( u^i(e) - \sum_{j=1}^d p^j \eta_j(e) ) - \sum_{j=1}^d p^j \eta_j(e) \Big)\nu_i(de).
\end{align}

Moreover,
\begin{equation}
f^2(g_i,u^i,p)
= \int_{\gamma_i = g_i}
\Big(h_{\lambda}( u^i(e) - \sum_{j=1}^d p^j \eta_j(e) ) - \sum_{j=1}^d p^j \eta_j(e) \Big)\nu_i(de) K^i(g_i,de),
\end{equation}
for $z \in \mathbb{R}^d$, $u^i \in (L^2 \cap L^\infty)(\nu_i)$, $g_i \in \gamma_i(E)$, and $p \in \mathbb{R}^d$, $i = 1, \ldots, d$.

\subsection{Characterization of the optimal strategies}

\begin{Theorem}
Suppose that the BSDE \eqref{BSDEexp_idio} with terminal condition $\xi = F$ and driver $f$ given by \eqref{defDriverfexp_idio} admits a solution
\[
(Y, Z, (U^i)_{i=1}^d) \in S^\infty \times L^2(W) \times \prod_{i=1}^d L^2(\widetilde{N}^i).
\]
Then there exists a Borel map
\[
p^* : \mathbb{R}^d \times \mathbb{R}^d \times \prod_{i=1}^d L^2(\nu_i) \to \mathbb{R}^d
\]
such that
\begin{equation*}
p^*((g_i)_{i=1}^d, z, (u^i)_{i=1}^d) \in C_d,
\end{equation*}
for all $(g_i)_{i=1}^d \in \prod_{i=1}^d \gamma_i(E)$, $z \in \mathbb{R}^d$, and $u^i \in (L^2 \cap L^\infty)(\nu_i)$. Moreover,
\[
V(x) = U_{\lambda}(x - Y_0),
\]
and the strategy
\[
\pi^*_t = p^*(\Delta G^1_t, \ldots, \Delta G^d_t, Z_t, (U^i_t)_{i=1}^d),
\quad t \in [0,T],
\]
is optimal in $\bar{\mathcal{A}}_{\mathrm{sgn}}$.
\end{Theorem}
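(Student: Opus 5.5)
The plan is to follow the proof of the identical-signal theorem in Section~\ref{sect3} and verify the four conditions of Proposition~\ref{MgOptPr} for the family $R^\pi_t=-\exp(-\lambda(X^{x,\pi}_t-Y_t))$. Conditions (i) and (ii) are immediate: $Y_T=F$ gives $R^\pi_T=U_\lambda(X^{x,\pi}_T-F)$, and $R^\pi_0=U_\lambda(x-Y_0)$ does not depend on $\pi$.

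For (iii), I apply It\^o's formula to $L^\pi=X^{x,\pi}-Y$ with the dynamics \eqref{defDynX_idio} and \eqref{BSDEexp_idio}.
\begin{itemize}
\item The only new feature is that, since $N^1,\dots,N^d$ are independent, a.s.\ no two of them jump simultaneously.
\item At a jump time of $N^i$ with mark $e$, the jump of $L^\pi$ is therefore $p^i_t(\gamma_i(e))\eta_i(e)-U^i_t(e)$, while every other signal increment $\Delta G^j_t$, $j\neq i$, vanishes.
\item Rewriting the strategy integrals as in the decomposition following \eqref{defDynX}, the drift of $R^\pi/R^\pi_{-}$ splits into a Brownian quadratic part, which is identical to Section~\ref{sect3}, and a sum over $i$ of integrals against $\nu_i$.
\item I disintegrate each $\nu_i$ on $\{\gamma_i\neq0\}$ through $K^i$ and $\mu^i$. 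The part on $\{\gamma_i=0\}$, where the strategy equals $p(0,\dots,0)$, combines with the Brownian term into $f^1$. The part on $\{\gamma_i=g\}$ gives $f^2(g,u^i,\cdot)$, evaluated at the strategy in which only the $i$-th signal is $g$.
\end{itemize}
The definition \eqref{defDriverfexp_idio} as a pointwise infimum makes this drift nonpositive for every $\pi$, so $R^\pi$ is a nonpositive local supermartingale. Boundedness of $Y$, of $C_d$ and of $\eta_i$, together with finiteness of the $\nu_i$, then gives uniform integrability via Lemma~1 of \cite{MAM08}, exactly as before. Hence $R^\pi$ is a true supermartingale.

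For (iv), I construct $p^*$ on the vectors $(g_1,\dots,g_d)$ that actually occur, namely the zero vector or $g_ie_i$ with $g_i\in\gamma_i(E)\setminus\{0\}$.
\begin{itemize}
\item At $(0,\dots,0)$, $p^*$ is a minimizer of $p\mapsto f^1(z,(u^i),p)$ over $C_d$. Its Hessian is $\lambda\,\mathrm{diag}(\sigma)\Sigma\,\mathrm{diag}(\sigma)$ plus nonnegative terms $\sum_i\int\ta(e)e^{\lambda(\cdot)}\nu_i(de)$, so the map is strictly convex because $\Sigma$ is nonsingular.
\item At $g_ie_i$, $p^*$ is a minimizer of the convex continuous map $p\mapsto f^2(g_i,u^i,p)$ over the compact set $C_d$.
\item On all other (null) configurations, $p^*$ is set to $0$.
\end{itemize}
With $\pi^*$ the resulting strategy, the drift of $R^{\pi^*}$ vanishes, so $R^{\pi^*}$ is a uniformly integrable local martingale and hence a martingale. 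Proposition~\ref{MgOptPr} then gives $V(x)=U_\lambda(x-Y_0)$ and the optimality of $\pi^*$. Admissibility is clear, since $p^*\in C_d$.

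The main obstacle is the Borel measurability of $p^*$ jointly in $(g,z,u)$, since $u$ ranges over infinite-dimensional $L^2$ spaces. The induction argument of Section~\ref{sect3} is awkward here. Instead I would use the measurable maximum theorem (Theorem~18.19 in \cite{aliprantis2006infinite}), as in the power case. The maps $f^1$ and $f^2$ are Carath\'eodory, that is continuous in $p$ by dominated convergence and measurable in $(g,z,u)$ because each $\gamma_i(E)$ is Borel. The constraint correspondence is the constant compact set $C_d$. The theorem therefore yields a measurable selector of the argmin, which is unique in the no-signal case by strict convexity. Piecing the selectors together over the Borel partition $\{0\}\cup\bigcup_i\{g_ie_i\}$ gives a Borel $p^*$. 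Composing with the predictable processes $Z$, $U^i$ and the increments $\Delta G^i$ then shows $\pi^*\in\bar\Ac_{\mathrm{sgn}}$.
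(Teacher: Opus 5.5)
Your verification scheme is the paper's: Proposition~\ref{MgOptPr}, Lemma~1 of \cite{MAM08} for uniform integrability, and convexity on the compact set $C_d$. Replacing the induction on $d$ by Theorem~18.19 of \cite{aliprantis2006infinite} is a sensible improvement.

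The gap is in the part specific to the idiosyncratic case, which the paper's one-line ``same steps plus additivity'' also passes over. It has two components.

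\textbf{Admissibility.} By definition of $\bar{\mathcal{A}}_{\mathrm{sgn}}$, the $j$-th coordinate of an admissible strategy is $p^j_t(\Delta G^j_t)$ and may not react to $\Delta G^i$ for $i\neq j$. A map on signal vectors therefore gives an admissible strategy only if $p^{*,j}(g_ie_i,z,u)=p^{*,j}(0,z,u)$ for all $j\neq i$. Taking $p^*(g_ie_i)$ to be a minimizer of $f^2(g_i,u^i,\cdot)$ over all of $C_d$ violates this, so ``admissibility is clear since $p^*\in C_d$'' is false.

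\textbf{Drift identification.} This is the more serious point. Your own (correct) jump computation shows that at a jump of $N^i$ only $p^i(\gamma_i(e))\eta_i(e)$ enters the wealth. Hence the $\nu_i$- and $K^i$-integrands of the drift are $h_\lambda(u^i(e)-p^i\eta_i(e))-p^i\eta_i(e)$. By contrast, $f^1$ and $f^2$ in \eqref{defDriverfexp_idio} contain $\sum_j p^j\eta_j(e)$ under $\nu_i$. The two agree only if $\eta_j=0$ $\nu_i$-a.e.\ for $j\neq i$, so ``combines into $f^1$'' and ``gives $f^2(g,u^i,\cdot)$'' are not identities.
\begin{itemize}
\item For $f^2$: the actual jump term of an admissible strategy equals $f^2$ evaluated at $(0,\dots,p^i,\dots,0)$. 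This is in general strictly larger than $\inf_{C_d}f^2$. So in general no admissible strategy makes the drift of $R^{\pi}$ vanish, and (iv) fails.
\item For $f^1$: the $\nu_i$-terms at $\langle p,\eta\rangle$ and at $p^i\eta_i$ are not comparable. So even the supermartingale property (iii) is not guaranteed.
\end{itemize}

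\textbf{How to close the argument.} You must work with the driver that It\^o's formula actually produces, i.e.\ with $p^i\eta_i(e)$ in place of $\sum_j p^j\eta_j(e)$ in the $i$-th jump integrals. The signal part for $g_i\neq0$ is then a convex function of the scalar $p^i\in[-\underline\pi,\overline\pi]$ alone.
\begin{itemize}
\item Take $p^*(0,z,u)$ to be the unique minimizer of $f^1$. It is strictly convex by your Hessian bound $\lambda\,\mathrm{diag}(\sigma)\Sigma\,\mathrm{diag}(\sigma)$.
\item At $g_ie_i$, let the $i$-th coordinate be a measurable scalar minimizer. Copy the remaining coordinates from $p^*(0,z,u)$; they do not affect the wealth at jumps of $N^i$.
\end{itemize}
Then $\pi^{*,j}_t=p^{*,j}(\Delta G^j_t\,e_j,Z_t,(U^k_t)_k)$, with $e_j$ the $j$-th unit vector, lies in $\bar{\mathcal{A}}_{\mathrm{sgn}}$. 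The drift of $R^{\pi^*}$ vanishes, and your uniform-integrability and measurable-selection steps go through unchanged.

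If the $\sum_j p^j\eta_j$ terms of \eqref{defDriverfexp_idio} are kept as written, the verification cannot be completed.
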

\begin{proof}
We follow the same steps as in \eqref{iden_signal_case} and we are done with additivity.    
\end{proof}
\subsection{Existence and uniqueness to the BSDEJ}
A strategy to prove the existence and uniqueness of a solution to the BSDEJ \eqref{BSDEexp_idio} consists in applying a transformation on the mark space and the Poisson measure  that reduces it to a classical BSDEJ, and then using the same truncation procedure as in \eqref{existence_solution} to conclude.\\
Let us introduce the product mark space $\widetilde{E} := \{1, \ldots, d\} \times E$
and define the random measure $N$ on 
$\mathbb{R}_+ \times \widetilde{E}$ by
\begin{align}
    N\bigl(ds,\, d(k,e)\bigr) &:= N^k(ds,\, de),
    \label{eq:aggregated_N}
\end{align}
with compensator
\begin{align}
    \nu\bigl(d(k,e)\bigr) 
    &:= \sum_{i=1}^d \delta_i(dk)\,\nu_i(de)
     = \delta_k(dk)\,\nu_k(de),
    \label{eq:aggregated_nu}
\end{align}
so that the compensated measure is
\begin{align}
    \widetilde{N}\bigl(ds,\, d(k,e)\bigr) 
    &:= N\bigl(ds,\, d(k,e)\bigr) - \nu\bigl(d(k,e)\bigr)\,ds
    \notag\\
    &\phantom{:}= N^k(ds,\, de) - \nu_k(de)\,ds
     = \widetilde{N}^k(ds,\, de).
    \label{eq:aggregated_Nt}
\end{align}
Due to the independence of each measure $N^i$, this measure defined on $\mathbb{R}_+ \times\widetilde{E}$ is a Poisson random measure with the compensator defined by \eqref{eq:aggregated_nu}.
We can rewrite the BSDEJ \eqref{BSDEexp_idio} as follow 
\begin{align*}
Y_t
&= F + \int_t^T \tilde{f}(s, Z_s, U_s)\, ds
- \sum_{i=1}^d \int_t^T Z^i_s \, dW^i_s \\
&\quad - \int_t^T \int_{\widetilde{E}} U_s(i,e)\, \widetilde{N}^i(ds,d(i,e)),
\qquad t \in [0, T].
\end{align*}
with $\tilde{f}(t, Z_t, U_t) = f(t, Z_t, U^1_t, \cdots, U^d_t)$.
With these notations, we use the norm of $U$ given by $$||U||^2_{L^2(\nu)} = \sum_{i=1}^d ||U^i||^2_{L^2(\nu_i)}$$ and obtain we have the desired result.

\section{Numerical Illustration}\label{sect7}
In this section, we consider the case of $d = 2$ assets with correlation matrix
\[
\Sigma =
\begin{pmatrix}
1 & \rho \\
\rho & 1
\end{pmatrix},
\qquad \rho \in [-1,1].
 \]
We take $E = \mathbb{R}^2$ equipped with the $\sigma$-field $\mathcal{E} = \mathcal{B}(\mathbb{R}^2)$. The marked random measure $N(dt,de)$ has a compensator the finite L\'evy measure
\[
\nu = \lambda\, \mathcal{N}(0,1) \otimes \mathcal{N}(0,1).
\]
For $e = (e_1,e_2) \in E$, we define
\[
\eta_1(e) = \tanh(a_1 e_1), \qquad
\eta_2(e) = \tanh(a_2 e_1),
\]
and the signal
\[
\gamma(e) = \alpha e_1 + e_2,
\]
where $(a_1,a_2) \in \mathbb{R}^2$ are constants and $\alpha \in \mathbb{R}$ represents the strength of the signal.

In particular, for all $e \in E$ and $i=1,2$, we have
\[
|\eta_i(e)| \leq 1.
\] Note that $\nu(\{\eta_i <-1\}) = 0$.
The measure $\nu$ is disintegrated as:
\[
\mu(dg) = \lambda \,\mathcal{N}_{0,\, 1+\alpha^2}(dg)
\]
\[
K(g, de_1\, de_2) = \mathcal{N}_{\frac{\alpha g}{1+\alpha^2},\, \frac{1}{1+\alpha^2}}(de_1) 
\otimes \delta_{g - \alpha e_1}(de_2)
\]
For $g \in  \R \setminus\{0\} $, the constraint set is:
\[
\Phi(g) = \left]-\tfrac{1}{2}, \tfrac{1}{2}\right] \times \left]-\tfrac{1}{2}, \tfrac{1}{2}\right]
\] and 
\[
\Phi(0)= \R
.\] Moreover we fix the constraint set $\bar{C} = [0,1] \times [0,1]$.

\medskip
\noindent\textbf{Optimal strategy without signal.}
The optimal strategy $p^*(0)$ is given by:
\[
p^*(0) = \argmax_{p \in \bar{C}} \left\{ \langle p,\, \tilde{\kappa} \rangle 
- \frac{1}{2}p^\top \operatorname{diag}(\sigma) \Sigma \operatorname{diag}(\sigma)\,p \right\}
\]
Since $\bar{C}$ is a box constraint, the unconstrained optimizer 
$\operatorname{diag}(\sigma)^{-1}\Sigma^{-1}\operatorname{diag}(\sigma)^{-1}\tilde{\kappa}$ 
is projected onto $\bar{C}$, giving the explicit solution:
\[
p^*(0) = \Pi_{\bar{C}}\!\left(\operatorname{diag}(\sigma)^{-1}\,\Sigma^{-1}\,
\operatorname{diag}(\sigma)^{-1}\tilde{\kappa}\right)
\]

\medskip
\noindent\textbf{Optimal strategy with signal.}
When the investor receives a signal $g \neq 0$, he solves:

\label{eq:optim_psignal}
\begin{equation*}
    p^*(g) = \argmax_{p \in \bar{C} \cap\Phi(g)} \int_{\mathbb{R}} 
\log\!\left(1 + p_1\,\eta_1(e_1) + p_2\,\eta_2(e_1)\right) 
\mathcal{N}_{\frac{\alpha g}{1+\alpha^2},\, \frac{1}{1+\alpha^2}}(de_1)
\end{equation*}

Problem~\eqref{eq:optim_psignal} does not admit a closed-form solution, 
and we therefore rely on numerical methods to study the impact of the 
signal on the optimal strategies.

\medskip
\noindent\textbf{Numerical experiments.} In the following we fix values of the parameters $$
\kappa_1 = 0.07, \kappa_2 =  0.2, \sigma_1 = 0.2,\sigma_2 = 0.4, \lambda=5,  s_1 = s_2=1, a_1 = 0.3, a_2=0.4 , \rho = 0.5. \mbox{ and } T = 1.$$
Figure~2 illustrates the evolution of the optimal fractions $(p_1(g), p_2(g))$ as functions of the received signal $g$ and the signal strength $\alpha$. As expected, for positive values of $\alpha$, the optimal portfolio is non-decreasing in $g$. When the signal is sufficiently reliable, the investor allocates wealth equally between the two assets. In contrast, when the signal is less reliable, the investor reduces exposure and may fully disinvest, i.e., $(p_1(g), p_2(g)) = (0, 0)$ (see \cite{bank2022merton} for more details).

Figure~3 shows that the expected utility is non-increasing with respect to the correlation parameter $\rho$. A higher positive correlation reduces diversification benefits, as asset returns become more aligned, effectively concentrating risk. Conversely, negative correlation improves diversification by offsetting fluctuations between assets, thereby reducing portfolio variance and increasing expected utility.
\begin{figure}[h!]
    \centering
    \includegraphics[width=0.6\textwidth]{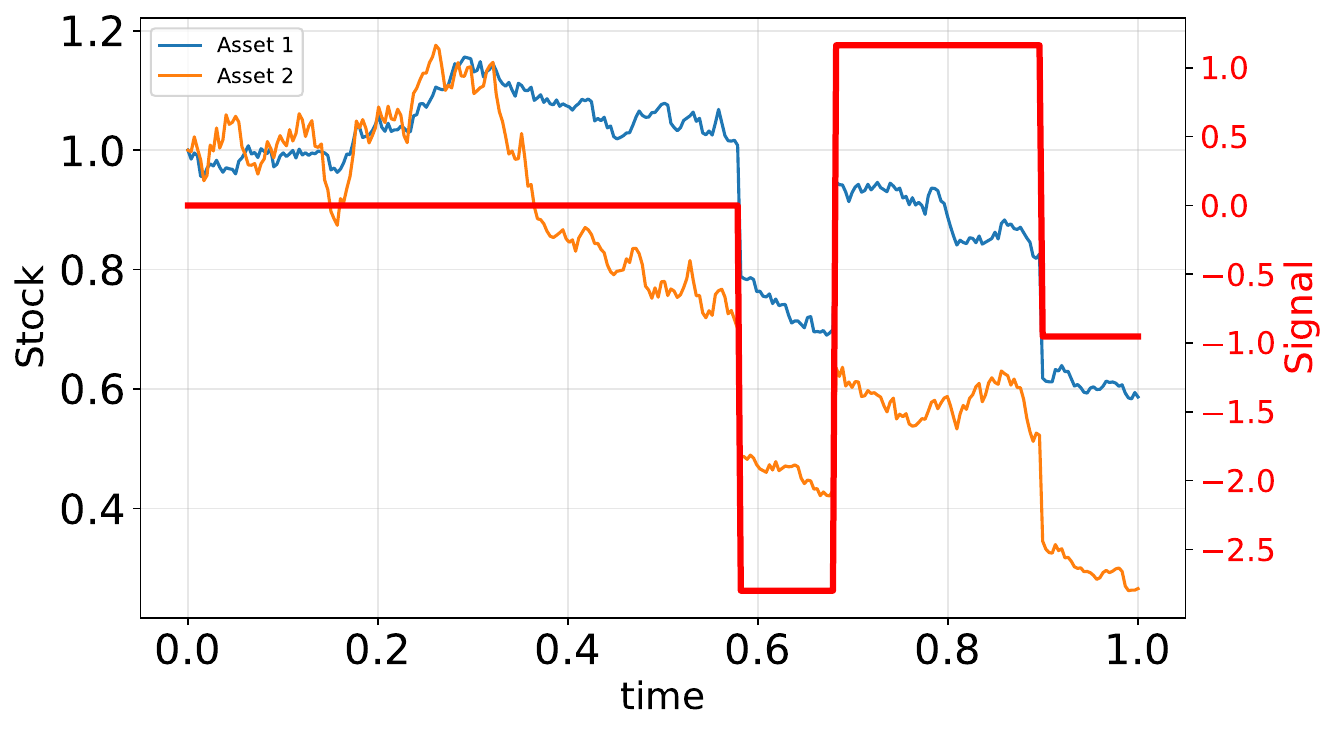}

    \centering
    \includegraphics[width=0.6\textwidth]{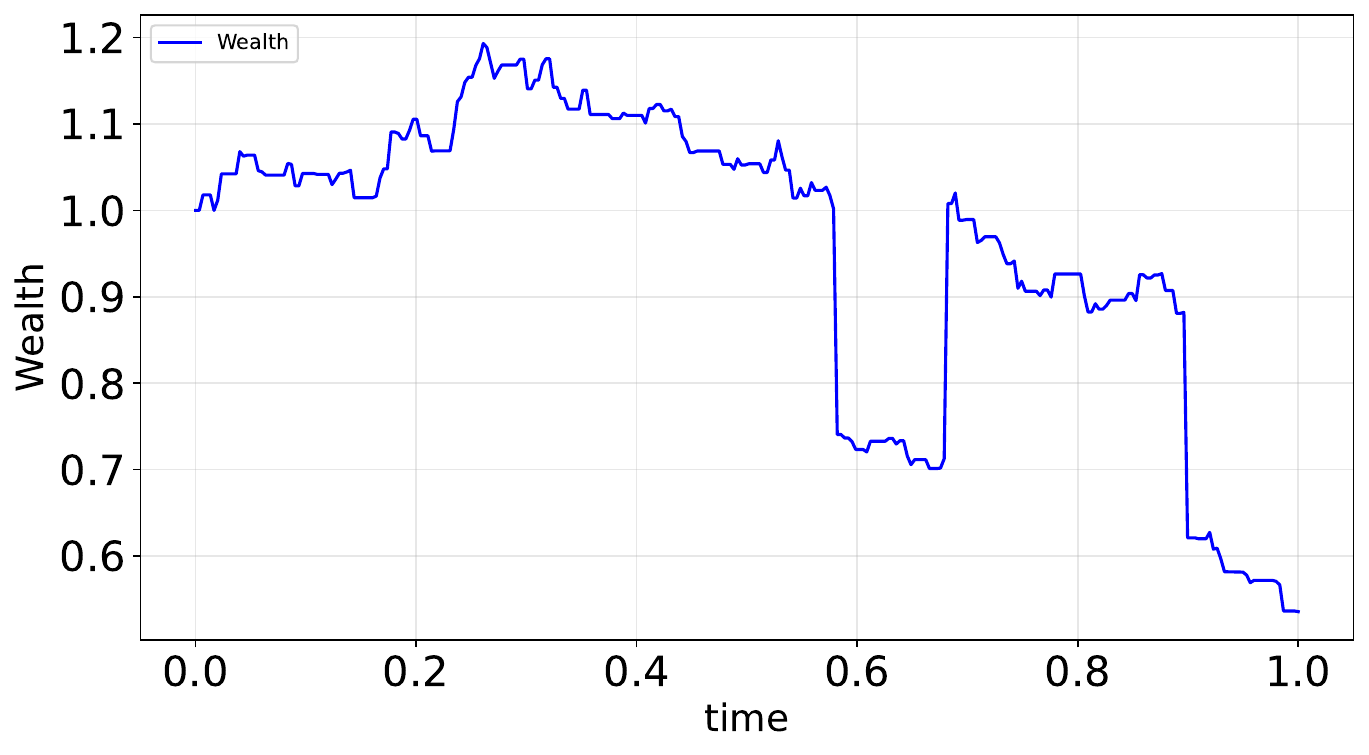}
    \caption{Sample trajectories of the two asset prices $S^1$ (blue) and
$S^2$ (orange), the signal process $G$ (red, right axis), and the
resulting wealth process $X^{\alpha^*}$ under the optimal strategy,
for signal strength $\alpha = 2.0$. The sharp drop in wealth around
$t = 0.6$ coincides with a large negative signal realization,
illustrating the impact of jump risk on portfolio performance.}
    \label{fig:example}
\end{figure}

\begin{figure}[htbp]
    \centering
    \begin{minipage}[b]{0.49\textwidth}
        \centering
        \includegraphics[width=\textwidth]{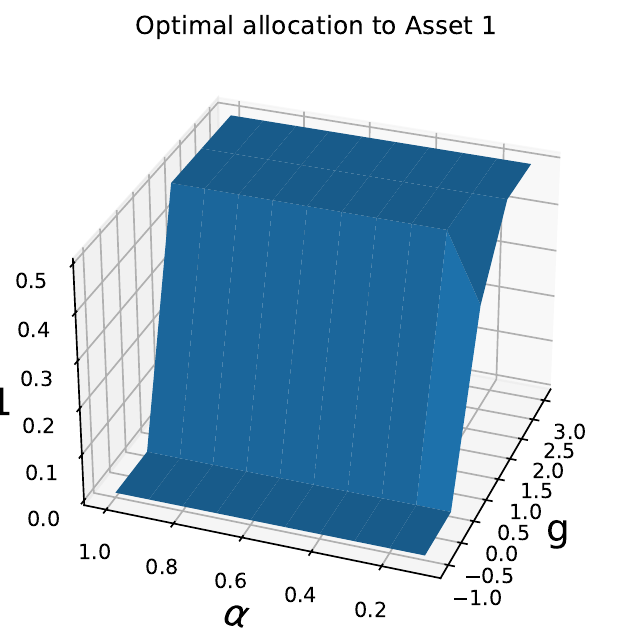}
       
    \end{minipage}
    \hfill
    \begin{minipage}[b]{0.495\textwidth}
        \centering
        \includegraphics[width=\textwidth]{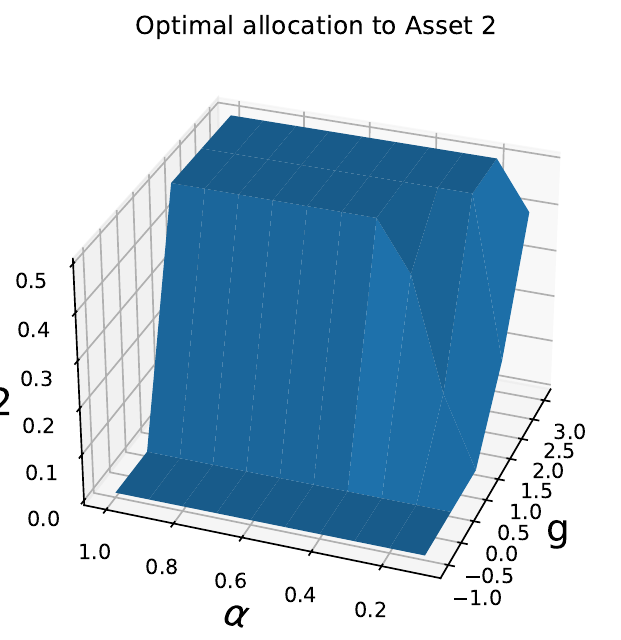}

    \end{minipage}
     \caption{The optimal signal-dependent strategy $(p_1(g), p_2(g))$ as a
function of the signal value $g$ and
the signal strength $\alpha\in[0,1]$. The flat regions correspond to
the credit-line constraints $p^i = \pm\overline\pi$ being active.}
\label{fig:optimal_strategy}
\end{figure}

\begin{figure}[htbp]
    \centering
    \includegraphics[width=0.7\textwidth]{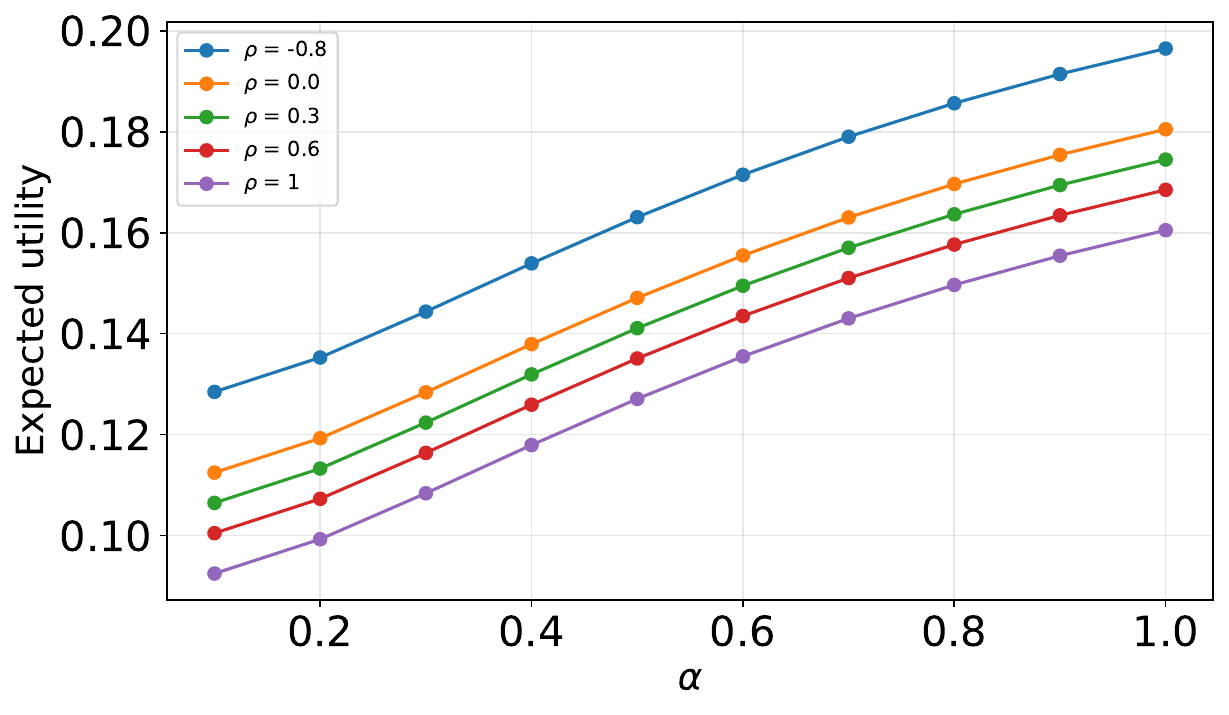}
    \caption{The expected utility of the investor in dependence of the strength of the signal $\alpha$.}
    \label{fig:example}
\end{figure}
\FloatBarrier
\clearpage

\section*{Conclusion}

In this paper, we studied the utility maximization problem in a multidimensional jump--diffusion framework with jump signals and correlated Brownian motions using a BSDE approach. We considered the exponential utility case, extending the one-dimensional framework of \cite{turki2026portfolio}, as well as the power and logarithmic utility cases under the assumption that all assets share a common signal. In the case of asset-specific (idiosyncratic) signals, we showed for the exponential utility that the problem can be reduced to the common-signal setting through a suitable transformation of the mark space and the compensator. The same reduction applies to the power and logarithmic utility cases and is omitted for the sake of brevity.

Several directions for future research naturally arise from this work. In particular, it would be of interest to:
\begin{enumerate}[(i)]
    \item extend the framework to a \emph{multi-agent} setting, leading to multidimensional quadratic BSDEs with jumps. Such systems involve interactions between agents and pose significant analytical challenges beyond the scope of the present paper;

    \item investigate the case of an \emph{infinite} L\'evy measure $\nu$, which requires a more delicate semimartingale decomposition, following the approach of \cite{bank2022merton}.
\end{enumerate}

\textbf{Acknowledgements}

The author would like to express his sincere gratitude to I.~Kharroubi and L.~Abbas-Turki for their invaluable guidance, insightful comments, and many fruitful discussions.

\clearpage
\addcontentsline{toc}{section}{References}
\bibliographystyle{plain}
\bibliography{references}

\begin{thebibliography}{10}

\bibitem{aliprantis2006infinite}
Charalambos~D Aliprantis and Kim~C Border.
\newblock {\em Infinite dimensional analysis: a hitchhiker’s guide}.
\newblock Springer, 2006.

\bibitem{bank2022merton}
Peter Bank and Laura K{\"o}rber.
\newblock Merton's optimal investment problem with jump signals, 2022.

\bibitem{BS25}
Peter Bank and Gemma Sedrakjan.
\newblock How much should we care about what others know? jump signals in
  optimal investment under relative performance concerns.
\newblock {\em arXiv:2503.16039v2}, 2025.

\bibitem{hu2005utility}
Ying Hu, Peter Imkeller, and Matthias M{\"u}ller.
\newblock Utility maximization in incomplete markets.
\newblock {\em Ann. Appl. Probab.}, 15(3):1691--1712, 2005.

\bibitem{jacod2013limit}
Jean Jacod and Albert Shiryaev.
\newblock {\em Limit theorems for stochastic processes}, volume 288.
\newblock Springer Science \& Business Media, 2013.

\bibitem{kazamaki2006continuous}
Norihiko Kazamaki.
\newblock {\em Continuous exponential martingales and BMO}.
\newblock Springer, 2006.

\bibitem{kobylanski}
Magdalena Kobylanski.
\newblock Backward stochastic differential equations and partial differential
  equations with quadratic growth.
\newblock {\em The annals of probability}, 28(2):558--602, 2000.

\bibitem{delong13}
Delong Lukasz.
\newblock {\em {Backward} {Stochastic} {D}ifferenrtial {E}quations with {Jumps}
  and {Their} {Actuarial} and {Financial} {Applications}}.
\newblock Springer-Verlag, 2013.

\bibitem{Merton69}
Robert~C. Merton.
\newblock {Lifetime Portfolio Selection under Uncertainty: The Continuous-Time
  Case}.
\newblock {\em Journal of Economic Theory}, 3(4):373–413, 1971.

\bibitem{Merton71}
Robert~C. Merton.
\newblock Optimum consumption and portfolio rules in a continuous-time model.
\newblock {\em The Review of Economics and Statistics}, 51(3):373–413,
  247-257.

\bibitem{MAM08}
Marie-Am{\'e}lie Morlais.
\newblock Utility maximization in a jump market model.
\newblock {\em Stochastics and Stochastics Reports}, 80:1--27, 2008.

\bibitem{MAM09}
Marie-Am{\'e}lie Morlais.
\newblock Quadratic bsdes driven by a continuous martingale and application to
  utility maximization problem.
\newblock {\em Finance and Stochastics}, 13:121--150, 2009.

\bibitem{oksendal2019applied}
Bernt {\O}ksendal and Agnes Sulem.
\newblock {\em Applied stochastic control of jump diffusions}, volume~3.
\newblock Springer, 2019.

\bibitem{KTPZ15}
Dylan Possamai, Nabil Kazi-Tani, and Chao Zhou.
\newblock {Quadratic BSDEs with jumps: a fixed-point approach}.
\newblock {\em Electron. J. Probab.}, 20:1--28, 2015.

\bibitem{rouge2000pricing}
Richard Rouge and Nicole El~Karoui.
\newblock Pricing via utility maximization and entropy.
\newblock {\em Mathematical Finance}, 10(2):259--276, 2000.

\bibitem{Royer06}
Manuella Royer.
\newblock Backward stochastic differential equations with jumps and related
  non-linear expectations.
\newblock {\em Stochastic Processes and Their Applications},
  116(10):1358--1376, 2006.

\bibitem{turki2026portfolio}
Lokmane~Abbas Turki, Sigui~Brice Dro, and Idris Kharroubi.
\newblock Portfolio exponential utility maximization with jump signals.
\newblock 2026.

\end{thebibliography}

\end{document}